\begin{document}
 
\title[Maximally singular solutions of Laplace equations]
{Maximally singular weak solutions\\ of Laplace equations}

\author[J. P. Mili\v{s}i\'{c} and D.\ \v Zubrini\'c]{Josipa-Pina Mili\v{s}i\'{c} and Darko \v Zubrini\'c}
\address{University of Zagreb, Faculty of Electrical Engineering and Computing,
Department of Applied Mathematics, Unska 3,
10000 Zagreb, Croatia}
\email{darko.zubrinic@fer.hr, pina.milisic@fer.hr}

\date{\today}

\thanks{J.\ P.\ Mili\v si\'c acknowledges support from   
the Croatian Science Foundation IP-2013-11-3955,
and the Austrian-Croatian Project of the Austrian Exchange Service (\"OAD) and the Ministry of Science and Education of the Republic of Croatia (MZO). 
D.\ \v Zubrini\'c acknowledges support from the Croatian Science Foundation IP-2014-09-2285 and by the Franco-Croatian
PHC-COGITO project.} 

\keywords{Laplace equation, weak solution, regularity, singularity, maximally singular weak solution, Hausdorff dimension, box dimension, 
generalized Cantor set, Stein's trick}

\subjclass[2010]{26A30, 28A78, 28A75}

\begin{abstract}
It is known that there exists an explicit function $F$ in $L^2(\Omega)$, where $\Omega$ is a 
given bounded open subset of $\mathbb{R}^N$, such that the corresponding weak solution of the Laplace BVP 
$-\Delta u=F(x)$, $u\in H_0^1(\Omega)$, is {\em maximally singular}; that is,
 the singular set of $u$ (defined in the Introduction) has the Hausdorff dimension equal to $(N-4)^+$. 
 This constant is optimal, i.e., the largest possible. Here, we show that much more is true: 
 when $N\ge5$, there exists $F\in L^2(\Omega)$ such that the corresponding weak solution has 
 the {\em pointwise
concentration of singular set} of $u$, in the sense of the Hausdorff dimension, equal to $N-4$ 
at {\em all} points of $\Omega$.
We also consider the problem of generating weak solutions with the property of {\em contrast}; 
that is, we construct solutions $u$ that are regular (more specifically, of class $C_{loc}^{2,\alpha}$ for arbitrary $\alpha\in(0,1)$) in any
prescribed open subset $\Omega_r$ of $\Omega$,
while they are {\em maximally singular} in its complement $\Omega\setminus\Omega_r$. We indicate several open problems.
\end{abstract}

\maketitle

\newtheorem{theorem}{Theorem}
\newtheorem{cor}{Corollary}
\newtheorem{prop}{Proposition}
\newtheorem{defn}{Definition}
\newtheorem{lemma}{Lemma}

\theoremstyle{remark}
\newtheorem{remark}{Remark}
\newtheorem{example}{Example}

\newcommand{\di}{\displaystyle}  

\font\csc=cmcsc10

\def\esssup{\mathop{\rm ess\,sup}}
\def\essinf{\mathop{\rm ess\,inf}}
\def\wo#1#2#3{W^{#1,#2}_0(#3)}
\def\w#1#2#3{W^{#1,#2}(#3)}
\def\wloc#1#2#3{W_{\scriptstyle loc}^{#1,#2}(#3)}
\def\osc{\mathop{\rm osc}}
\def\var{\mathop{\rm Var}}
\def\supp{\mathop{\rm supp}}
\def\Cap{{\rm Cap}}
\def\norma#1#2{\|#1\|_{#2}}
\def\diam{\mathop{\rm diam}}

\def\C{\Gamma}

\let\text=\mbox

\catcode`\@=11
\let\ced=\c
\def\a{\alpha}
\def\b{\beta}
\def\c{\gamma}
\def\d{\delta}
\def\g{\lambda}
\def\o{\omega}
\def\q{\quad}
\def\n{\nabla}
\def\s{\sigma}
\def\div{\mathop{\rm div}}
\def\sing{{\rm Sing}\,}
\def\singg{{\rm Sing}_\ty\,}

\def\A{{\cal A}}
\def\F{{\cal F}}
\def\H{{\cal H}}
\def\W{{\bf W}}
\def\M{{\cal M}}
\def\N{{\cal N}}

\def\eR{{\bf R}}
\def\eN{{\bf N}}
\def\Ze{{\bf Z}}
\def\Qe{{\bf Q}}
\def\Ce{{\bf C}}

\def\ty{\infty}
\def\e{\varepsilon}
\def\f{\varphi}
\def\:{{\penalty10000\hbox{\kern1mm\rm:\kern1mm}\penalty10000}}
\def\ov#1{\overline{#1}}
\def\D{\Delta}
\def\O{\Omega}
\def\pa{\partial}

\def\st{\subset}
\def\stq{\subseteq}
\def\pd#1#2{\frac{\pa#1}{\pa#2}}
\def\sgn{{\rm sgn}\,}
\def\sp#1#2{\langle#1,#2\rangle}

\newcount\br@j
\br@j=0
\def\q{\quad}
\def\gg #1#2{\hat G_{#1}#2(x)}
\def\inty{\int_0^{\ty}}
\def\od#1#2{\frac{d#1}{d#2}}

\def\bg{\begin}
\def\eq{equation}
\def\bgeq{\bg{\eq}}
\def\endeq{\end{\eq}}
\def\bgeqnn{\bg{eqnarray*}}
\def\endeqnn{\end{eqnarray*}}
\def\bgeqn{\bg{eqnarray}}
\def\endeqn{\end{eqnarray}}

\def\bgeqq#1#2{\bgeqn\label{#1} #2\left\{\begin{array}{ll}}
\def\endeqq{\end{array}\right.\endeqn}

\def\abstract{\bgroup\leftskip=2\parindent\rightskip=2\parindent
        \noindent{\bf Abstract.\enspace}}
\def\endabstract{\par\egroup}

\def\udesno#1{\unskip\nobreak\hfil\penalty50\hskip1em\hbox{}
             \nobreak\hfil{#1\unskip\ignorespaces}
                 \parfillskip=\z@ \finalhyphendemerits=\z@\par
                 \parfillskip=0pt plus 1fil}
\catcode`\@=11

\def\cal{\mathcal}
\def\eR{\mathbb{R}}
\def\eN{\mathbb{N}}
\def\Ze{\mathbb{Z}}
\def\Qu{\mathbb{Q}}
\def\Ce{\mathbb{C}}

\def\osd{\mathrm{osd}\,}

\def\sdim{\mbox{\rm s-dim}\,}
\def\sd{\mbox{\rm sd}\,}

\section{Introduction} The history of the study of singularities in the context of various 
function spaces and boundary value problems is extremely rich. 
Among numerous authors who investigated singular sets of Sobolev functions and of weak solutions of elliptic equations, we
mention Deny, Lions, Fuglede, Aronszajn, Smith, Serrin, Reshetnyak, Stein, Havin, Mazya, Bagby, Ziemer, Meyers, Veron, Mou, Grillot and
Kilpel\"ainen. The mentioned work can be found among the references in \cite{cras}.
In this paper, we are interested in the problem of concentration of singularities 
near a given point for a weak solution of the
 simplest Laplace boundary value problem (BVP)
\bgeq\label{laplace}
-\D u=F(x),\,\,u\in H_0^1(\O),
\endeq
where $\O$ is a given bounded open subset of $\eR^N$, and $F\in L^2(\O)$. 
We consider functions $F$ in (\ref{laplace})
which generate {\em maximally singular} solutions $u:\O\to\ov\eR$ (to be defined below), and study the question of their 
pointwise regularity and irregularity (see Theorem \ref{cor}). 
In order to dimensionally measure a singular set, we use the Hausdorff dimension. Here, we recall the property of {\em countable stability}
of the Hausdorff dimension which will be frequently used in our proofs through this article. 
Let $A_1, A_2,\ldots$ be a countable sequence of subsets of $\eR^N$, then one has
\begin{align}
 \dim_H \Big( \bigcup_{i=1}^\infty A_i \Big) = \sup_{1 \leq i < \infty} \big( \dim_H A_i \big).
 \label{CountStab}
\end{align}
Fractal dimensions used in this paper (namely, the Hausdorff and box dimensions) are defined and studied, e.g., in \cite{falc}.

Let us first define the notion of maximally singular solution, 
introduced in
\cite{chaos}; see also \cite{lana}.
Let $u:\O\to\ov\eR$ be a given Lebesgue measurable function, where $\O$ is an open subset of $\eR^N$. By $\sing u$ we denote the {\em singular set} of $u$, i.e.\ the set of
points $a\in\O$ such that there exist positive constants $C=C(a)$, $\c=\c(a)$ and $r=r(a)$ satisfying the condition $u(x)\ge C|x-a|^{-\c}$
for a.e.\ $x\in B_r(a)$. Here, $B_r(a)$ denotes the open ball in $\eR^N$ of radius $r$. The singular set is well defined, 
since if $u=v$ a.e.\ in $\O$, then clearly $\sing u=\sing v$.

Let $X=X(\O)$ be any given space (or just a nonempty set) of Lebesgue measurable functions $u:\O\to\ov\eR$.
We define {\em singular dimension} of $X$ by
\bgeq\label{sing}
\sdim X=\sup\{\dim_H(\sing u):u\in X\}.
\endeq
This definition has been introduced in \cite{cras}. Here, $\sdim X\in[0,N]$.
If the supremum is achieved for some $v\in X$, then $v$ is said to be {\em maximally singular} function in~$X$.


\begin{figure}
	\centering
	\includegraphics[width=60mm, height=60mm]{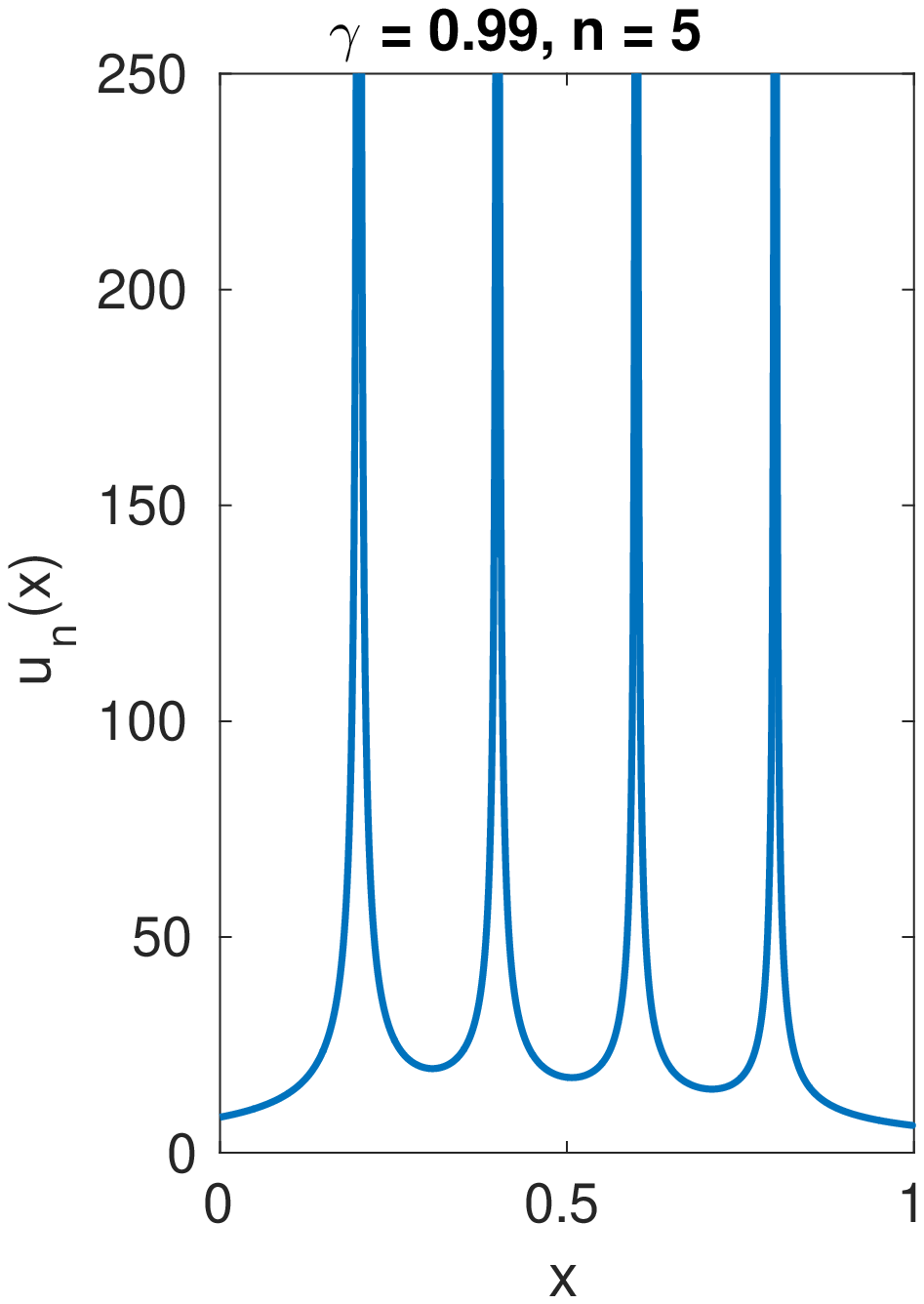}
	\includegraphics[width=60mm, height=60mm]{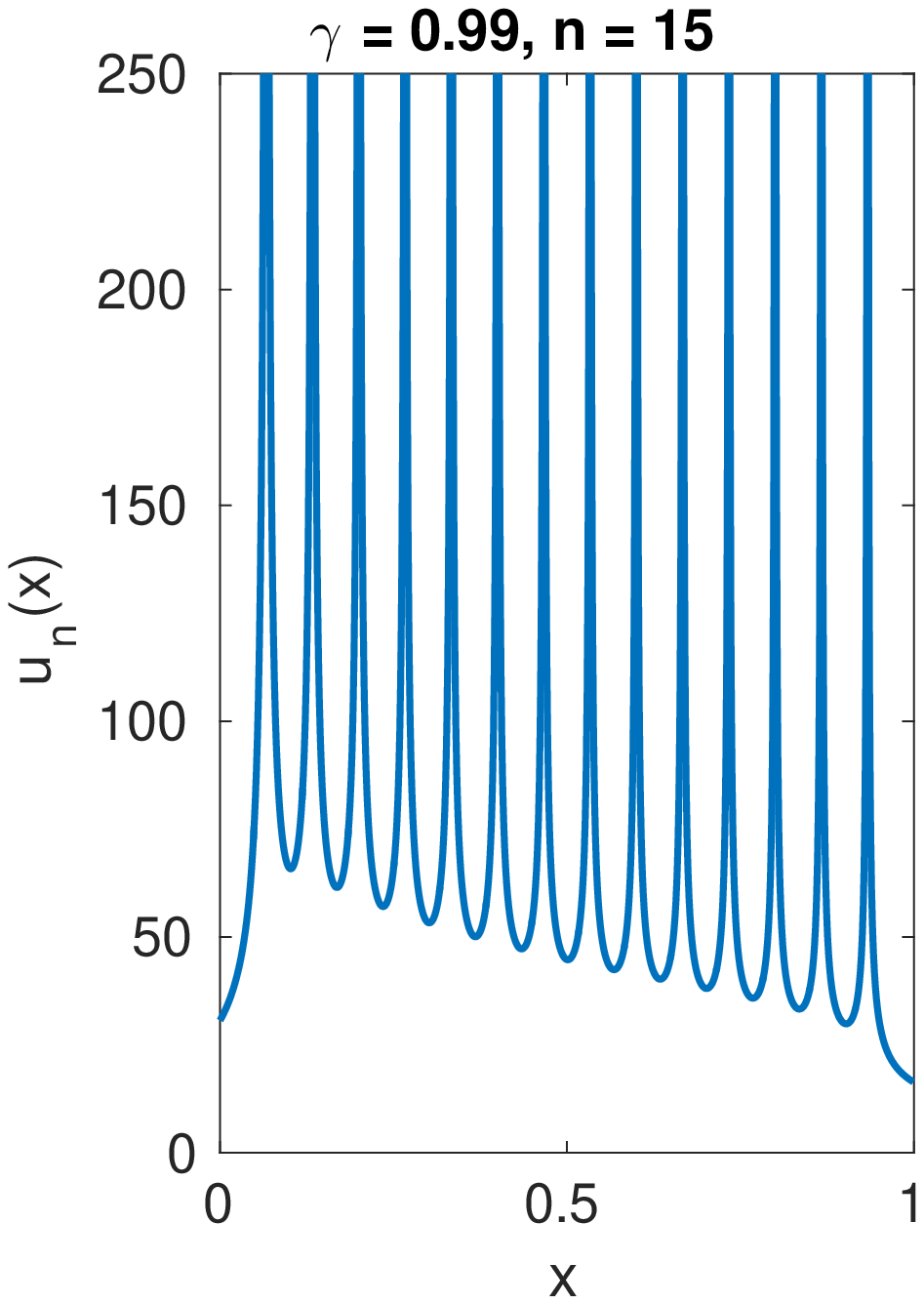}
	\caption{There are Lebesgue integrable functions $u:\O\to\ov\eR$, 
	with $\O$ being a bounded open subset of $\eR^N$, such that $\dim_B(\sing u)=N$, 
	that can be explicitly constructed as $ u(x) = \sum_{k=1}^\ty c_k |x - a_k|^{-\gamma}$, 
	where $(a_k)_k$ is a dense subset of $\O$ and $\c\in(0,N)$. 
	Since the graph of the function $u$ cannot be drawn (namely, its singular set $\sing u$ is dense $\O$) even for $N=1$ and $\O=(0,1)$, 
	we draw the function $u_n:(0,1)\to\ov\eR$ instead, defined by $ u_n(x) = \sum_{k=1}^n c_k |x - a_k|^{-\gamma}$,
	where $\di c_k = k^{-1/2}$, $a_k = \frac{k}{n}$ where $k=1,2,\ldots,n-1$.
	See Remark \ref{fig1}.}
	\label{Fig1}
\end{figure}

\begin{remark}\label{fig1} 
Figure \ref{Fig1} and the caption below indicate that it has no sense to define the singular 
dimension $\sdim X(\O)$ introduced in \eqref{sing} via the (upper) box dimension instead of the 
Hausdorff dimension. This has already been noticed in \cite{chaos}, which we reproduce here from the sake of completeness.
Note that for the function $u:\O\to\eR$ described in the caption to Figure~\ref{Fig1}, 
we have that $\dim_B(\sing u)=\dim_B(\ov{\sing u})=\dim_B(\ov\O)=N$, so the value of $\sdim X(\O)$ would be either zero or $N$. 
The construction of the function $u$ is based on the Stein's trick; see Stein \cite[Ch.\ 5, \S6.3]{stein}.
\end{remark}

We recall that $\sdim L^p(\O)=N$ for $1\le p<\ty$, $\sdim W^{k,p}(\O)=(N-kp)^+$ if $1<p<\ty$ (here $t^+:=\max\{0,t\}$ is the positive part of $t$),
see \cite{cras}.
Moreover, these spaces contain maximally singular functions, and furthermore, maximally singular functions are dense in respective spaces; see \cite{lana}. 
The analogous claim holds also for Besov spaces, Lizorkin-Triebel spaces, and Hardy spaces; see \cite{archiv}. 
For Besov spaces, see also the results obtained by Jaffard and Meyer in \cite{jm}. 

The main results of this article are stated in Theorem~\ref{Fsums} and Theorem~\ref{0,N-4} below. 
Using a suitable choice of the right-hand sides $F\in L^2(\O)$ of BVP \eqref{laplace}, 
it is possible to generate its weak solutions $u\in H_0^1(\O)$ with the property of 
{\em contrast}; that is, solutions that are $C^2$-regular in arbitrary
prescribed open subset  $\O_r$ of $\Omega$,
while they are {\em maximally singular} in {\em any} open subset of $\O\setminus\ov{\O_r}$; see Theorem~\ref{0,N-4}. 

For a given nonempty set $A\st\eR^N$ and $\d>0$, it will be convenient to define $A_\d$ as the open $\d$-neighborhood of $A$;
that is, $A_\d=\{x\in\eR^N:d(x,A)<\d\}$, where $d(x,A)$ is the Euclidean distance from $x$ to $A$.


\section{Some auxiliary results}

In the following lemma, we introduce a suitable class of measurable functions $F:\O\to\ov\eR$ that are 
singular in a subset $A$ of $\O$ (in the sense that $\sing F\supseteq A$) and are locally Lipschitz continuous in $\O\setminus \ov A$. 
These functions will enable us to generate the corresponding weak solutions of BVP \eqref{laplace} that are singular 
in any point of $A$ and $C^2$-regular in $\O\setminus\ov A$; see Theorem \ref{Fsums} below for a more precise formulation.

\begin{lemma}\label{lip}
Let $\O\stq\eR^N$ be a bounded open set. Let $F:\O\to\ov\eR$ be a function of the form 
$$
F(x)=\sum_{k=1}^\ty c_kd(x,A_k)^{-\c_k},
$$
where  $A_k$ are compact subsets of $\O$ such that $A:=\cup_kA_k$ is not dense in $\O$, $(\c_k)_{k\in\eN}$ 
is a bounded sequence of positive real numbers, and $\sum_k|c_k|<\ty$.
 Then, the function $F$ is locally Lipschitz in $\O\setminus\ov A$. If $c_k>0$ for all $k\in\eN$, then 
$A\stq\sing F\stq \ov A$.
\end{lemma}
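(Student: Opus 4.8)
The plan is to establish the three assertions separately: local Lipschitz continuity of $F$ on $\O\setminus\ov A$, the inclusion $A\stq\sing F$ when all $c_k>0$, and the inclusion $\sing F\stq\ov A$.

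For the local Lipschitz part, fix a compact set $K\st\O\setminus\ov A$. Since $A$ is not dense, $\ov A\neq\O$, and by compactness there is $\d_0>0$ with $d(x,A)\ge\d_0$ for all $x\in K$ (and one may also assume $K$ stays a fixed distance from $\pa\O$). On such a $K$, each summand $x\mapsto d(x,A_k)^{-\c_k}$ is Lipschitz: the distance function $x\mapsto d(x,A_k)$ is $1$-Lipschitz with values in $[\d_0,\diam\O]$, and $t\mapsto t^{-\c_k}$ is Lipschitz on that interval with constant $\c_k\d_0^{-\c_k-1}\le M\,\d_0^{-M-1}$ where $M=\sup_k\c_k<\ty$; composing, the $k$-th term has Lipschitz constant bounded by $|c_k|\,M\,\d_0^{-M-1}$. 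Summing over $k$ and using $\sum_k|c_k|<\ty$ gives a finite Lipschitz constant for $F$ on $K$ (the series also converges uniformly on $K$, so $F$ is genuinely defined and finite there). Hence $F$ is locally Lipschitz on $\O\setminus\ov A$.

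For $\sing F\stq\ov A$, note that if $a\in\O\setminus\ov A$ then a small ball $B_r(a)$ is a compact subset of $\O\setminus\ov A$ on which $F$ is Lipschitz, hence bounded; thus no inequality of the form $F(x)\ge C|x-a|^{-\c}$ with $\c>0$ can hold a.e.\ near $a$, so $a\notin\sing F$. For the reverse inclusion $A\stq\sing F$ under the hypothesis $c_k>0$, fix $a\in A$, say $a\in A_{k_0}$. Since every term $c_kd(x,A_k)^{-\c_k}$ is nonnegative, $F(x)\ge c_{k_0}d(x,A_{k_0})^{-\c_{k_0}}\ge c_{k_0}|x-a|^{-\c_{k_0}}$ for all $x$ near $a$ (using $d(x,A_{k_0})\le|x-a|$ since $a\in A_{k_0}$), which is exactly the defining inequality for $a\in\sing F$ with $C=c_{k_0}$, $\c=\c_{k_0}$. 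This settles $A\stq\sing F\stq\ov A$.

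The main obstacle is the local Lipschitz estimate: one must be careful that the bound on the $k$-th Lipschitz constant is \emph{uniform in $k$} apart from the factor $|c_k|$, which is precisely where the boundedness of $(\c_k)$ is used (without it, $t\mapsto t^{-\c_k}$ could have arbitrarily large Lipschitz constant on $[\d_0,\diam\O]$ as $\c_k\to\ty$, and the series of Lipschitz constants might diverge even though $\sum|c_k|<\ty$). Everything else is a routine triangle-inequality argument; the hypothesis that $A$ is not dense is used only to guarantee $\O\setminus\ov A$ is nonempty and that compact subsets of it are bounded away from $A$.
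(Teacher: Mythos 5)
Your proof is correct and follows essentially the same route as the paper's: bound each term's Lipschitz constant on a set bounded away from $\ov A$ using the $1$-Lipschitz property of the distance function together with the mean-value estimate for $t\mapsto t^{-\c_k}$, then sum using $\sum|c_k|<\ty$ and $\sup_k\c_k<\ty$; the inclusions $A\stq\sing F\stq\ov A$ follow as you indicate. (One small slip: the bound $\c_k\d_0^{-\c_k-1}\le M\d_0^{-M-1}$ fails if $\d_0>1$; replace $\d_0$ by $\min\{1,\d_0\}$, as in the paper, or shrink $\d_0$ at the outset.)
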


\begin{proof}
{\em Part} 1.
First, let us show that the series corresponding to $F(x)$ is absolutely convergent for each $x\in\O\setminus \ov A$. 
Indeed, take any $x\in\O\setminus \ov A$.  Since $A_k\stq A$ then $d(x,A_k)\ge d(x,A)=d(x,\ov A)>0$. 
Denoting $\c=\sup_k\c_k$,  from $0<\c_k\le\c<\ty$
and assuming $d(x,A)\le 1$, we obtain that
$$
\sum_{k=1}^\ty |c_k|d(x,A_k)^{-\c_k}\le d(x,A)^{-\c}\sum_{k=1}^\ty |c_k|<\ty.
$$
If $d(x,A)\ge1$, then, similarly as above, we have that $\sum_{k=1}^\ty |c_k|d(x,A_k)^{-\c_k}\le\sum_{k=1}^\ty |c_k|$.

In what follows we use the fact that for any $\a \neq 0$ and  $u,v>0$, we have that 
\begin{equation}\label{lagrange}
|u^{\a}-v^{\a}|\le|\a|\,\max\{u^{\a-1},v^{\a-1}\}|u-v|,
\end{equation}
which is an easy consequence of the Lagrange mean value theorem. It suffices to show that $F$
is Lipschitz on any set of the form $\O\setminus \ov{A_\d}$ for $\d>0$ small enough, where $\ov{A_\d}=\{x\in\eR^N:d(x,A)\le \d\}$ 
is the closure of $A_\d$. For convenience, we take
$0<\d<\min\{1,\diam(\O)\}$, where $\diam(\O):=\max\{|x-y|:x,y\in\O\}$ denotes the diameter of $\O$.
Indeed, observe that $\O\setminus\ov A$ is equal to the union of the open sets $\O\setminus \ov{A_\d}$ for all indicated values of $\d$; that is
\begin{equation}\label{union}
\O\setminus\ov A=\bigcup_{\d\in(0,\min\{1,\diam(\O)\})}(\O\setminus \ov{A_\d}).
\end{equation}
In other words, it suffices to verify that the function $F$ is
Lipschitz continuous on $\O\setminus\ov{ A_\d}$ for any $\d\in(0,\min\{1,\diam(\O)\})$.

Let $x,y\in\O\setminus \ov{A_\d}$ be fixed. From $d(x,A_k)\ge d(x,A)>\d$, and similarly for $y$, using Eq.\ \eqref{lagrange} we have that
\bgeqn
|d(x,A_k)^{-\c_k}-d(y,A_k)^{-\c_k}|&\le&\c\,\max\{d(x,A_k)^{-\c_k-1},d(y,A_k)^{-\c_k-1}\}\,|d(x,A_k)-d(y,A_k)|\nonumber\\
&=&\c\,\min\{d(x,A_k),d(y,A_k)\}^{-\c_k-1}\,|d(x,A_k)-d(y,A_k)|\nonumber\\
&\le&\c\,\min\{1,\d\}^{-\c-1}\,|x-y|\nonumber
\endeqn
for any $k\in\eN$,
where we have also used a well known fact that the distance function $x\mapsto d(x,A_k)$ has 
the Lipschitz property with the Lipschitz constant equal to $1$; that is, $|d(x,A_k)-d(y,A_k)|\le|x-y|$.
Hence, we conclude that
\bgeqn
|F(x)-F(y)|&\le&
\sum_{k=1}^{\ty}|c_k|\,|d(x,A_k)^{-\c_k}-d(y,A_k)^{-\c_k}|\nonumber\\
&\le&\c\,\min\{1,\d\}^{-\c-1}\left(\sum_{k=1}^{\ty}|c_k|\right)|x-y|.\nonumber
\endeqn
In other words, $F$ is Lipschitz continuous on the open set $\O\setminus\ov{A_\d}$, and the Lipschitz constant of the function $F$, 
restricted to this set, is not exceeding the value of 
\[ \c\,\min\{1,\d\}^{-\c-1}\left(\sum_{k=1}^{\ty}|c_k|\right)<\ty. \] 
Hence, due to \eqref{union}, the function $F$ is locally Lipschitz on the set $\O\setminus\ov A$.
\smallskip

{\em Part} 2.
Concerning the last claim in Lemma \ref{lip}, in which we assume that all of the constants $c_k$ are positive, 
for $k\in\eN$, the inclusion $A\stq\sing F$ is obvious. The second inclusion $\sing F\stq\ov A$ follows at once from {\em Step}~1.

This completes the proof of the lemma.
\end{proof}

As another auxiliary result, we shall need the following interesting Lebesgue integrability condition for functions $F:\O\to\ov\eR$ of the form 
$F(x):=d(x,A)^{-\c}$, which involves the upper Minkowski 
(or box) dimension of $A$, denoted by $\ov\dim_BA$. Since in the sequel, the set $A$ will always be of the Lebesgue measure zero, 
then $F$ is well defined a.e.\ in $\O$. 

\begin{prop}[Harvey--Polking, \cite{hp}]\label{hp} Let $\O$ be a nonempty bounded and open subset of $\eR^N$. Assume that
$A$ is a nonempty bounded subset of $\O$ such that $\ov A\st\O$ and $\c$ is a nonzero real number. 
\begin{itemize}
\item[($a$)] If $\c < N-\ov\dim_BA$, then $d(\,\cdot\,,A)^{-\c}\in L^1(\O)$.
\item[($b$)] If $p > 1$ and $\c<\frac1p(N-\ov\dim_BA)$, then $d(\,\cdot\,,A)^{-\c}\in L^p(\O)$.
\end{itemize}
\end{prop}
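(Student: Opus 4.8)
The plan is to estimate the integral $\int_\O d(x,A)^{-\c q}\,dx$ for a suitable exponent ($q=1$ in part ($a$), $q=p$ in part ($b$)) by slicing $\O$ according to the distance to $A$. First I would fix $\s$ with $\ov\dim_B A < \s < N-\c q$ (which is possible precisely under the stated hypothesis), so that by the definition of upper box dimension there is a constant $C>0$ with $|A_t| := |\{x : d(x,A)<t\}| \le C t^{N-\s}$ for all small $t>0$; here $|\cdot|$ denotes Lebesgue measure, and one uses that the $t$-neighbourhood of $A$ is comparable to a union of about $t^{-\s}$ balls of radius $t$. Since $\ov A\st\O$ is compact, for $t$ beyond some threshold the neighbourhood is simply contained in the bounded set $\O$, so the estimate $|A_t|\le C\min\{t^{N-\s},|\O|\}$ holds for all $t>0$ after enlarging $C$.

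Next I would write the integral over the region where $d(x,A)\le t_0$ (for a fixed small $t_0$) using the layer-cake / co-area style decomposition into dyadic shells $S_j=\{2^{-j-1}t_0 < d(x,A)\le 2^{-j}t_0\}$ for $j\ge 0$. On $S_j$ one has $d(x,A)^{-\c q}\le (2^{-j-1}t_0)^{-\c q}$ and $|S_j|\le |A_{2^{-j}t_0}|\le C(2^{-j}t_0)^{N-\s}$, so the contribution of $S_j$ is bounded by a constant times $2^{j\c q}\cdot 2^{-j(N-\s)} = 2^{-j(N-\s-\c q)}$, and the series converges since $N-\s-\c q>0$ by the choice of $\s$. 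On the complementary region $d(x,A)>t_0$ the integrand is bounded by $t_0^{-\c q}$ and $\O$ has finite measure, so that part is trivially finite. Adding the two pieces gives $d(\cdot,A)^{-\c}\in L^q(\O)$. (If $\c<0$ the function is bounded on the bounded set $\O$ and the claim is immediate, so one may assume $\c>0$ throughout; in part ($b$) the hypothesis $\c<\frac1p(N-\ov\dim_B A)$ is exactly $\c p < N-\ov\dim_B A$, which is what the argument needs with $q=p$.)

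The main obstacle is the passage from the definition of the upper box dimension to the volume bound $|A_t|\le Ct^{N-\s}$: one must recall that $\ov\dim_B A$ can be characterised via the Minkowski content, i.e.\ $\ov\dim_B A = N - \liminf_{t\to 0^+}\frac{\log|A_t|}{\log t}$, equivalently via the number $N(t)$ of $t$-balls needed to cover $A$ (with $|A_t|\asymp N(t)\,t^N$ up to dimensional constants), and then use that $\ov\dim_B A<\s$ forces $N(t)\le t^{-\s}$ for all sufficiently small $t$. Everything after that is the elementary dyadic summation above, together with the harmless reduction to $\c>0$ and the use of $\ov A\st\O$ to control the far region. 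Since the statement is quoted as due to Harvey and Polking, I would simply cite \cite{hp} for the details and present the dyadic-shell computation as the indication of proof.
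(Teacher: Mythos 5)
Your proposal is correct, and it reconstructs the standard Minkowski--content / dyadic--shell argument that lies behind the Harvey--Polking result. The paper itself does not write out a proof: for part ($a$) it simply cites \cite{hp} (noting that there the statement is phrased in terms of Minkowski content rather than box dimension), and for part ($b$) it observes that the claim follows immediately from ($a$), since $\c < \frac1p(N-\ov\dim_B A)$ is equivalent to $p\c < N-\ov\dim_B A$, whence $d(\,\cdot\,,A)^{-p\c}\in L^1(\O)$ and therefore $d(\,\cdot\,,A)^{-\c}\in L^p(\O)$. Your computation supplies the substance behind that citation, and your device of running the estimate with a general exponent $q\in\{1,p\}$ is exactly the reduction the paper uses for ($b$). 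The two nontrivial ingredients you identify --- reducing to $\c>0$ (trivial, since $d(\,\cdot\,,A)^{|\c|}$ is bounded on the bounded set $\O$), and converting $\ov\dim_BA<\s$ into a volume bound $|A_t|\lesssim t^{N-\s}$ for small $t$ via the equivalence of the covering--number and Minkowski--content characterizations of upper box dimension --- are indeed the crux, and both are handled correctly; the dyadic summation that follows is routine once that volume bound is in place.
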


Case ($a$) of Proposition \ref{hp} is contained in \cite[p.\ 42]{hp}, but stated equivalently in terms of the Minkowski content of $A$.
For a detailed discussion about the Harvey--Polking result, and its role in the study of complex dimensions of fractal sets, 
see \cite{laprazu1}.  
The claim in case ($b$) of the proposition follows immediately from ($a$). In this paper, 
the exponent $\c$ will always be positive, so that in both ($a$) and $(b)$ we 
necessarily have $\ov\dim_BA<N$, and hence, $A$ is of the Lebesgue measure zero.
\medskip

 In order to generate weak solutions $u$ of BVP \eqref{laplace} on a prescribed
nonempty subset $A$ of $\O$, we consider functions $F\in L^2(\O)$ such that $F(x)\ge d(x,A)^{-\c}$.

\begin{theorem}\label{hpr} Let $\O$ be a bounded open subset of $\eR^N$ and $N\ge5$.
Assume that a function $F\in L^2(\O)$ is such that $F(x)\ge C d(x,A)^{-\c}$ a.e.\ in $\O$, where $A$ is a compact subset of $\O$, $C>0$ and $\c>0$ such that 
\begin{equation}
 2 < \c < \frac{1}{2} \Big( N - \ov\dim_BA \Big).
 \label{HP_LD}
\end{equation}
Then for the corresponding weak solution $u\in H_0^1(\O)$ of \eqref{laplace}, we have that
\begin{equation}
A\stq \sing u.
\end{equation}
Moreover, for any point $a\in A$, the solution $u$ of \eqref{laplace} has singularity of order $\c-2>0$ at $a$. More specifically, there are two positive constants $C_1$ and $C_2$, such that
\begin{equation}
u(x)\ge C_1d(x,A)^{-(\c-2)}-C_2 \; \textrm{ a.e.  in }\; \O.
\end{equation}
\end{theorem}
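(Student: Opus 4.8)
The plan is to reduce everything to a single pointwise comparison --- carried out \emph{uniformly} in the centre $a\in A$ --- against the explicit radial function $\Phi_a(x):=\frac{C}{c_0}\,|x-a|^{-(\c-2)}$, which (for a suitable constant $c_0>0$) is a distributional subsolution of $-\D w=F$. First I would record what the standing constraint \eqref{HP_LD} buys: since $\ov\dim_BA\ge0$ it forces $\c<N/2$, hence $\c<N$, so that $c_0:=(\c-2)(N-\c)>0$, $2\c<N$, and $0<\c-2<N-2$. Fix $a\in A$. Because $\c<N/2$, both $\Phi_a$ and $|\n\Phi_a|$ are square integrable over the bounded set $\O$, so $\Phi_a\in H^1(\O)$; and because $\c-2<N-2$, the distributional Laplacian of $|x-a|^{-(\c-2)}$ on $\O$ carries no concentrated (Dirac) contribution at $a$ (the singularity is weaker than that of the Newtonian kernel), so a direct computation of the radial Laplacian gives $-\D\Phi_a=C\,|x-a|^{-\c}$ in $\mathcal{D}'(\O)$, the right-hand side being in $L^2(\O)$ since $2\c<N$ (cf.\ Proposition~\ref{hp}).

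Next I would carry out the comparison. Fix $\rho_0\in(0,d(A,\pa\O))$ --- this is positive since $A$ is compact and $A\st\O$ with $\O$ open --- so that $\ov{B_{\rho_0}(a)}\st\O$ for every $a\in A$, and set $C_1:=C/c_0$, $C_2:=C_1\,\rho_0^{-(\c-2)}$, $v_a:=\Phi_a-C_2\in H^1(\O)$. The positive part $v_a^+=(\Phi_a-C_2)^+$ vanishes wherever $|x-a|\ge\rho_0$, hence has compact support in $\O$ and thus lies in $H_0^1(\O)$; combined with $u^-\in H_0^1(\O)$ and the pointwise bound $(v_a-u)^+\le v_a^++u^-$, the standard fact that an $H^1(\O)$ function dominated by a nonnegative $H_0^1(\O)$ function is itself in $H_0^1(\O)$ gives $(v_a-u)^+\in H_0^1(\O)$. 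Since $a\in A$ we have $d(x,A)\le|x-a|$, whence $-\D v_a=C\,|x-a|^{-\c}\le C\,d(x,A)^{-\c}\le F=-\D u$ in $\mathcal{D}'(\O)$; testing this inequality (extended to nonnegative $H_0^1$ test functions by density) with $\varphi=(v_a-u)^+\ge0$ yields $\int_\O|\n(v_a-u)^+|^2\,dx\le0$, so $(v_a-u)^+$ has a.e.\ vanishing gradient and, being in $H_0^1(\O)$, vanishes by the Poincar\'e inequality. Therefore $u(x)\ge C_1\,|x-a|^{-(\c-2)}-C_2$ for a.e.\ $x\in\O$, with $C_1,C_2$ \emph{independent of} the point $a\in A$ (they depend only on $C,\c,N$ and on $d(A,\pa\O)$).

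It remains to harvest the two conclusions. Taking a countable dense subset $\{a_j\}_{j\in\eN}$ of $A$, the previous estimate holds for every $a_j$ off one null set (a countable union of null sets, together with the null set $A$), and passing to the supremum over $j$ --- the Stein-type trick already used in the paper --- gives $u(x)\ge C_1\,d(x,A)^{-(\c-2)}-C_2$ for a.e.\ $x\in\O$; here one uses that $A$ has measure zero, that $d(x,\{a_j\}_j)=d(x,A)$ by density, and that $t\mapsto t^{-(\c-2)}$ is continuous and strictly decreasing on $(0,\ty)$. This is the quantitative ``moreover'' assertion. For the inclusion $A\stq\sing u$, fix $a\in A$ and pick $r=r(a)>0$ so small that $\frac{C_1}{2}\,s^{-(\c-2)}\ge C_2$ for all $0<s\le r$; then $u(x)\ge\frac{C_1}{2}\,|x-a|^{-(\c-2)}$ for a.e.\ $x\in B_r(a)$, which is precisely the condition defining $a\in\sing u$, with exponent $\c-2>0$ --- the asserted ``singularity of order $\c-2$''.

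The main obstacles --- the points I would write out in full --- are: (i) the identity $-\D\Phi_a=C|x-a|^{-\c}$ on \emph{all} of $\O$, i.e.\ the absence of a concentrated contribution at $a$, which rests precisely on $\c-2<N-2$ (equivalently $\c<N$); and (ii) the membership $(v_a-u)^+\in H_0^1(\O)$, which is what legitimizes using the unbounded function $v_a$ in the weak comparison principle. Both are consequences of \eqref{HP_LD}: the upper bound $\c<\frac12(N-\ov\dim_BA)\le N/2$ delivers the integrability and the positivity of $c_0$, while the lower bound $\c>2$ is exactly what makes the produced singularity genuine (for $\c\le2$ the weak solution stays bounded), and $N\ge5$ is what is needed for the admissible range of $\c$ to be nonempty at all. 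No regularity of $\pa\O$ is used --- only the weak maximum/comparison principle on a bounded open set.
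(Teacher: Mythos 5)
Your proof is correct and rests on the same core idea as the paper's sketch: compare $u$ against the explicit radial function $C_1|x-a|^{-(\c-2)}-C_2$, which is the weak solution of $-\D w=C|x-a|^{-\c}$, and invoke the weak comparison principle. The computation $c_0=(\c-2)(N-\c)$ is correct, and your verification that the constraint $2<\c<\tfrac12(N-\ov\dim_BA)\le N/2$ guarantees $c_0>0$, $\Phi_a\in H^1$, $|x-a|^{-\c}\in L^2$, and the absence of a Dirac mass at $a$ (since $\c-2<N-2$) is accurate.

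Where you go beyond the paper's sketch: the paper compares only on a ball $B_{r(a)}(a)$, producing constants and a neighborhood that a priori depend on $a$, and stops once $a\in\sing u$ is established; the stated ``moreover'' estimate $u(x)\ge C_1 d(x,A)^{-(\c-2)}-C_2$ a.e.\ in $\O$ is asserted but not actually derived there. You fill this gap by (i) choosing a single $\rho_0\in(0,d(A,\pa\O))$ (using compactness of $A$) so that $C_1,C_2$ are uniform in $a$, (ii) truncating $\Phi_a$ at its zero level and verifying $(v_a-u)^+\in H_0^1(\O)$ via the lattice-ideal property of $H_0^1$ (so the test-function step is fully justified without invoking a black-box comparison principle, and without using $F\ge0\Rightarrow u\ge0$ as the paper does), and (iii) passing to a countable dense subset of $A$ and taking a supremum to convert the pointwise family of bounds into the single $d(x,A)$-estimate. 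This is a genuine improvement in completeness while remaining the same method.
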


\begin{proof}[Sketch of the proof] (For details we refer to \cite[for $p=2$]{gen2000} and \cite{lana}.) Using Prop.~\ref{hp}, we see that the right-hand side inequality in \eqref{HP_LD} implies that $d(\,\cdot\,,A)^{-\c}\in L^2(\O)$. 

On the other hand, the left-hand side inequality of \eqref{HP_LD} gives that $A \subseteq \sing u$, where $u$ is the weak solution of \eqref{laplace}.
In order to show this, let us fix any $a\in A$, and let $r>0$ be such that $B_r(a)\stq\O$. Let $u_1$ be a weak solution of the following auxiliary BVP:
\bgeq\label{laplace1}
-\D u_1=C|x-a|^{-\c},\,\,u_1\in H_0^1(B_r(a)),
\endeq
By direct computation (solving the corresponding spherically symmetric BVP problem $-\D v=C|x|^{-\c}$ in $B_r(0)$ and $v=0$ on $\pa B_r(0)$, via the associated ODE), we see that $u_1(x)=v(x+a)$ can be explicitly obtained in the following form: 
$$
u_1(x)= C_1|x-a|^{-(\c-2)}-C_2.
$$ 
Since $F(x)\ge F_1(x):=C|x-a|^{-\c}$ a.e.\ in $B_r(a)$ and $u\ge u_1$ on $\pa B_r(a)$ (note that the right-hand side $F$ of \eqref{laplace} is nonnegative, so that $u\ge0$ a.e.\ in $\O$; also, note that the set $A$ is of the Lebesgue measure zero, since necessarily $\ov\dim_BA<N$), using the comparison principle we conclude that $u(x)\ge u_1(x)=C_1|x-a|^{-(\c-2)}-C_2$ a.e.\
in $B_r(a)$, which proves that $a\in\sing u$.
\end{proof}

\begin{remark} In Theorem \ref{hpr} we can recognize a regularizing effect of the Laplace BVP. Namely, if the right-hand side $F\in L^2(\O)$ of the BVP \eqref{laplace} has a singularity of order $\c>2$ at a point $a\in\O$, then the corresponding weak solution $u\in H_0^1(\O)$ of \eqref{laplace} has a singularity of smaller order, equal to $\c-2$. It is interesting that for the corresponding $p$-Laplace BVP $-\D_p u=F(x)$ with $u\in W_0^{1,p}(\O)$ and $F\in L^{p'}(\O)$, where $p':=p/(p-1)$, the regularizing property does not hold when $p\ne 2$. We have the phenomenon of the so called {\em loss of regularity of weak solutions}; see \cite{dea}.\end{remark}

The following result deals with the regularity of maximally singular solutions of the standard Laplace BVP \eqref{laplace}.
Here, condition $N\ge5$ cannot be dropped; that is, Theorem \ref{Fsums} is not true for $N=1,2,3$ and~$4$. 
Indeed, for any $F\in L^2(\O)$, the weak solution $u$ of BVP \eqref{laplace} is contained in the Sobolev space $H^2(\O)$, 
so that by the Sobolev embedding theorem (see, e.g., \cite{brezis} or \cite{gt}), for $N\le3$ the solutions are continuous, 
while for $N=4$ solutions of \eqref{laplace} cannot have (strong) singularities, however, singularities of weaker type (say logarithmic) are possible in the letter case.
In particular, for $N=1,2,3$ and~$4$ we have $\sing u=\emptyset$.

\begin{theorem}\label{Fsums}
Let $\O$ be a bounded open set in $\eR^N$ of class $C^2$, where $N\ge5$, and let $F:\O\to\ov \eR$ be defined by
\bgeq\label{Fsuma}
F(x)=\sum_{k=1}^\ty \frac{c_k}{\|d(\,\cdot\,,A_k)^{-\c_k}\|_{L^2}}d(x,A_k)^{-\c_k},
\endeq
where $A_k$ are subsets of $\O$ such that  $\ov{A_k}\st\O$ for any $k\in\eN$, and $c_k$ are positive constants such that
\bgeq\label{sums}
\sum_{k=1}^\ty c_k<\ty\q\mbox{\rm and}\q\sum_{k=1}^\ty \frac{c_k}{\|d(\,\cdot\,,A_k)^{-\c_k}\|_{L^2}}<\ty,
\endeq
and
\bgeq\label{dim}
2<\c_k<\frac1{2}(N-\ov\dim_BA_k),\q\lim_{k\to\ty}\dim_HA_k=N-4.
\endeq
Then, the weak solution $u$ of the elliptic BVP $-\D u=F(x)$, $u\in H_0^1(\O)$, is maximally singular, 
i.e., $\dim_H(\sing u)=N-4$, and $u\in H^2(\O)$. Furthermore,
$u$ is singular on $A:=\cup_{k=1}^\ty A_k$ and regular on $\O\setminus\ov A$ in the following sense: 
\begin{equation*}
A\stq\sing u\stq \ov A\q\mbox{\rm and}\q u\in C_{loc}^{2,\a}(\O\setminus\ov A),
\end{equation*} 
for any $\a\in(0,1)$. 
More specifically, $u\in C^{2,\a}(\O\setminus A_\d)$ for any $\d>0$ and any $\a\in(0,1)$.
\end{theorem}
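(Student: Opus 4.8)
The plan is to decompose the desired statement into three essentially independent assertions and verify each in turn: (i) $F\in L^2(\O)$ and the weak solution $u$ exists and lies in $H^2(\O)$; (ii) $A\stq\sing u$, with $\dim_H(\sing u)=N-4$, so that $u$ is maximally singular; (iii) $u\in C^{2,\a}_{loc}(\O\setminus\ov A)$, in fact $u\in C^{2,\a}(\O\setminus A_\d)$ for every $\d>0$.

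For (i), first observe that each term in \eqref{Fsuma} is normalized, so $\|\,c_k\|d(\,\cdot\,,A_k)^{-\c_k}\|_{L^2}^{-1}\,d(\,\cdot\,,A_k)^{-\c_k}\|_{L^2}=c_k$; note that $d(\,\cdot\,,A_k)^{-\c_k}\in L^2(\O)$ by the Harvey--Polking Proposition~\ref{hp}(b) applied with $p=2$, using the right inequality in \eqref{dim} together with $\dim_H A_k\le\ov\dim_B A_k$, and that this norm is finite and nonzero so the normalization makes sense. Then $\|F\|_{L^2}\le\sum_k c_k<\ty$ by \eqref{sums} and the triangle inequality in $L^2$, so $F\in L^2(\O)$. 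Since $\O$ is bounded and of class $C^2$, the Lax--Milgram theorem gives a unique weak solution $u\in H_0^1(\O)$, and elliptic $L^2$-regularity up to the boundary (Agmon--Douglis--Nirenberg, as in \cite{brezis} or \cite{gt}) yields $u\in H^2(\O)$.

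For (ii), I would use Theorem~\ref{hpr}. Fix $m\in\eN$; splitting $F=F_m+R_m$ where $F_m$ is the $m$-th term and $R_m\ge0$ is the (convergent, nonnegative) remainder, we have $F(x)\ge F_m(x)=c_m\|d(\,\cdot\,,A_m)^{-\c_m}\|_{L^2}^{-1}\,d(x,A_m)^{-\c_m}\ge C_m\,d(x,A_m)^{-\c_m}$ a.e.\ in $\O$ with $C_m>0$; since $2<\c_m<\tfrac12(N-\ov\dim_B A_m)$ by \eqref{dim}, Theorem~\ref{hpr} applies to give $A_m\stq\sing u$. Taking the union over $m$ yields $A=\cup_m A_m\stq\sing u$, and the second inclusion $\sing u\stq\ov A$ will follow from step (iii) (a point outside $\ov A$ has a neighborhood on which $u$ is $C^2$, hence locally bounded, so cannot be singular). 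For the dimension: $\sing u\stq\ov A=\ov{\cup_m A_m}$, and I would need an estimate $\dim_H(\sing u)\le N-4$; one way is to note that the weak solution lies in $H^2(\O)=W^{2,2}(\O)$, and by the result quoted in the introduction that $\sdim W^{2,2}(\O)=(N-2\cdot2)^+=N-4$, whence $\dim_H(\sing u)\le N-4$. For the reverse, countable stability \eqref{CountStab} gives $\dim_H(\sing u)\ge\dim_H(A)=\sup_m\dim_H(A_m)$, and by the last condition in \eqref{dim}, $\lim_{k\to\ty}\dim_H A_k=N-4$ forces $\sup_m\dim_H A_m\ge N-4$. Combining, $\dim_H(\sing u)=N-4$, so $u$ is maximally singular.

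For (iii), fix $\d>0$. On the open set $\O\setminus A_\d$ I claim $F$ is locally Lipschitz; indeed, Lemma~\ref{lip} (applied with the compact sets $\ov{A_k}$ — note $A\stq\cup_k\ov{A_k}$ and I may shrink $\d$ so that this enlarged set is still at positive distance, or equivalently work with $A_{k,\d/2}$) shows that each term is Lipschitz on $\O\setminus\ov{A_\d}$ with constant $\le\c\min\{1,\d\}^{-\c-1}$ times its coefficient, and the coefficient series $\sum_k c_k\|d(\,\cdot\,,A_k)^{-\c_k}\|_{L^2}^{-1}$ converges by the second condition in \eqref{sums}; here $\c:=\sup_k\c_k<\ty$, which is finite because \eqref{dim} forces $\c_k<\tfrac12 N$. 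Hence $F$ is Lipschitz, in particular $C^{0,\a}_{loc}$ for every $\a\in(0,1)$, on $\O\setminus\ov{A_\d}$. Interior Schauder estimates for the Poisson equation $-\Delta u=F$ (see \cite{gt}) then give $u\in C^{2,\a}_{loc}(\O\setminus\ov{A_\d})$, and letting $\d\downarrow0$ and using \eqref{union}-type exhaustion $\O\setminus\ov A=\cup_{\d>0}(\O\setminus\ov{A_\d})$ yields $u\in C^{2,\a}_{loc}(\O\setminus\ov A)$ for every $\a\in(0,1)$; the sharper statement $u\in C^{2,\a}(\O\setminus A_\d)$ follows by applying the Schauder bound uniformly, using that the Hölder seminorm of $F$ on $\O\setminus A_{\d/2}$ is bounded.

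The main obstacle I anticipate is the bookkeeping in step (ii): one must ensure that restricting $F$ from below by the single term $F_m$ (discarding the rest of the series) is legitimate, which requires the nonnegativity of every term — guaranteed by $c_k>0$ — and that the resulting lower bound $C_m d(x,A_m)^{-\c_m}$ genuinely satisfies the two-sided exponent condition \eqref{HP_LD} of Theorem~\ref{hpr} simultaneously for the chosen $m$; this is exactly what the hypothesis \eqref{dim} is designed to provide. The other delicate point is making the passage from "$C^{2,\a}_{loc}$ on each $\O\setminus\ov{A_\d}$" to a uniform $C^{2,\a}(\O\setminus A_\d)$ bound, which needs the Hölder norm of $F$ to be controlled on a slightly larger set $\O\setminus A_{\d/2}$ together with a global-in-that-region Schauder estimate; everything else is a routine assembly of Lax--Milgram, $L^2$-regularity, Harvey--Polking, Lemma~\ref{lip}, Theorem~\ref{hpr}, and countable stability.
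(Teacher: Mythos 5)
Your proof is correct and follows the paper's outline for parts (i) and (iii): $F\in L^2(\O)$ by normalization and the Harvey--Polking bound, $u\in H^2(\O)$ by ADN, and $C^{2,\alpha}_{loc}$ regularity on $\O\setminus\ov A$ via Lemma~\ref{lip} plus Schauder. The genuine difference is in part (ii): the paper disposes of maximal singularity by citing \cite[Theorem 2]{lana}, whereas you reconstruct it from first principles — apply Theorem~\ref{hpr} term-by-term (exploiting $F\ge F_m$ and $c_k>0$) to get $A_m\subseteq\sing u$, use countable stability \eqref{CountStab} and $\dim_H A_m\le\ov\dim_B A_m<N-4$ with $\lim\dim_H A_m=N-4$ to get $\dim_H(\sing u)\ge N-4$, and close with $u\in W^{2,2}(\O)$ and $\sdim W^{2,2}(\O)=N-4$ for the reverse inequality. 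This self-contained route is actually the one the paper itself later deploys in the proof of Theorem~\ref{0,N-4}, so you have in effect inlined the reference; this buys a proof that does not depend on unpacking \cite{lana}, at the cost of a few extra lines. Your careful handling of the compactness issue (replacing $A_k$ by $\ov{A_k}$, noting $d(\,\cdot\,,A_k)=d(\,\cdot\,,\ov{A_k})$ and $\ov\dim_B\ov{A_k}=\ov\dim_B A_k$) is also a point the paper glosses over, since Lemma~\ref{lip} and Theorem~\ref{hpr} are stated for compact sets while the theorem only assumes $\ov{A_k}\st\O$. One remark on the very last claim $u\in C^{2,\a}(\O\setminus A_\d)$: you flag, correctly, that passing from $C^{2,\a}_{loc}$ to a genuine $C^{2,\a}$ bound on $\O\setminus A_\d$ requires a uniform Schauder estimate; since $\O\setminus A_\d$ reaches $\pa\O$, this needs \emph{boundary} Schauder estimates, for which the $C^2$-regularity of $\pa\O$ is exactly the relevant hypothesis. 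The paper's own proof also leaves this step implicit (it cites only the interior result GT Theorem 9.19), so you are not missing anything the paper supplies, but it would be worth making the appeal to global Schauder theory (e.g., GT Theorems 6.6 and 6.19) explicit.
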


\begin{proof}  The first condition in (\ref{sums}) and the second inequality in (\ref{dim}) imply that $F\in L^2(\O)$. Hence, 
the corresponding weak solution $u$ of \eqref{laplace} is maximally singular; see \cite[Theorem 2]{lana}.
According to Agmon, Douglis and Nirenberg regularity result (see, e.g., Brezis \cite[Th\'eor\`eme IX.32]{brezis}),
the solution is in the Sobolev space $H^2(\O)$.

On the other hand, the second condition in (\ref{sums}) and Lemma~\ref{lip} imply that $F$ is 
locally Lipschitz on $\O\setminus\ov A$. Hence, $F$ is of class $C^{\a}_{loc}$ on this set
for any $\a\in(0,1)$. Since $u\in H^2(\O)$, by Gilbarg and Trudinger \cite[Theorem~9.19, p.~243]{gt}, 
it follows that the solution $u$ is in fact
of class $C^{2,\a}_{loc}$ on $\O\setminus\ov A$. 
\end{proof}

\smallskip

\begin{remark}\label{ck}
Both inequalities in (\ref{sums}) can be fulfilled if we assume that $c_k\to0^+$ sufficiently rapidly
as $k\to\ty$. For example, it suffices to take the sequence $(c_k)_{k\in\eN}$ such that
$$
0<c_k\le\min\{2^{-k},2^{-k}\|d(\,\cdot\,,A_k)^{-\c_k}\|_{L^2}\}\q\mbox{for all $k\in\eN$}.
$$
Furthermore, we recall that we have $\dim_HA_k\le\ov\dim_HA_k$ for all $k\ge1$; see, e.g., \cite{falc}. 
Hence, since $\ov\dim_BA_k\le N-4$ by \eqref{dim}, we then also have $\lim_{k\to\ty}\ov\dim_BA_k=N-4$.
\end{remark}
\medskip

In Theorem \ref{cor} below, we consider the case when the singular set $\sing u$ is 
dense in any prescribed open subset $\O_s$ of $\O$, while $u$ is of class $C^2$ in $\O\setminus \ov{\O_s}$.

\begin{theorem}\label{cor} 
Let $\O$ be a given nonempty bounded open subset of $\eR^N$, with $N\ge 5$ and
let $\O_s$ be any prescribed nonempty open subset of $\O$. Then, there exists an explicit function 
$F\in L^2(\O)$ such that the singular set $\sing u$ of the associated weak solution $u\in H_0^1(\O)$ of \eqref{laplace} 
  is dense in $\O_s$ $($that is, $\ov{\sing u} = \ov{\O_s}$ $)$, while on the other hand,
 $u\in C_{loc}^{2,\a}(\O\setminus \ov{\O_s})$ for any $\a\in(0,1)$. 
 In addition to this, we can achieve that the solution $u$ of \eqref{laplace} is maximally singular, i.e., $\dim_H(\sing u)=N-4$.
\end{theorem}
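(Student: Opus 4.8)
The plan is to build the desired $F$ as a countable sum of the building blocks from Theorem~\ref{Fsums}, where the compact pieces $A_k$ are chosen to accumulate densely in $\O_s$ and, simultaneously, to realize Hausdorff dimensions approaching $N-4$. First I would fix a countable dense subset $\{a_j:j\in\eN\}$ of $\O_s$. Around each $a_j$ I place a small generalized Cantor-type compact set $A_{j}\st\O_s$ with $\ov{A_j}\st\O$; by the standard construction of generalized Cantor sets (as referenced by the keywords of the paper), one can arrange that $\dim_HA_j$ equals, or is as close as desired to, any prescribed value in $[0,N)$, and in particular one can pick a sequence of such sets with $\dim_HA_j\to N-4$ while each $A_j$ stays inside a ball of radius $<2^{-j}$ centered at $a_j$. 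Re-indexing $\{A_j\}$ as $\{A_k\}_{k\in\eN}$, the union $A:=\bigcup_kA_k$ is then dense in $\O_s$, has $\ov A=\ov{\O_s}$ (the balls shrink, so no accumulation occurs outside $\ov{\O_s}$), and is \emph{not} dense in $\O$ provided $\ov{\O_s}\ne\ov\O$; if $\O_s$ happens to be dense in $\O$ one instead uses a slightly larger open set separating $A$ from $\pa\O$, or observes that the argument only needs $\ov A\st\O$, which holds.

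Next I would choose the exponents $\c_k$. For each $k$ pick $\c_k\in(2,\tfrac12(N-\ov\dim_BA_k))$; this interval is nonempty because the generalized Cantor sets can be taken with $\ov\dim_BA_k$ arbitrarily close to $\dim_HA_k$ (indeed one typically builds them with equal Hausdorff and box dimensions), so $\ov\dim_BA_k\le N-4$ and hence $\tfrac12(N-\ov\dim_BA_k)\ge 2$, with strict inequality after a harmless shrink of the dimension. Then I set $F$ as in \eqref{Fsuma}, normalizing each term by $\|d(\,\cdot\,,A_k)^{-\c_k}\|_{L^2}$ (finite by Proposition~\ref{hp}(b)) and choosing the coefficients $c_k$ decaying fast enough, e.g. $0<c_k\le\min\{2^{-k},2^{-k}\|d(\,\cdot\,,A_k)^{-\c_k}\|_{L^2}\}$ as in Remark~\ref{ck}, so both series in \eqref{sums} converge. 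With \eqref{dim} verified ($\lim_k\dim_HA_k=N-4$ by construction), Theorem~\ref{Fsums} applies \emph{verbatim}: the weak solution $u$ satisfies $\dim_H(\sing u)=N-4$, $u\in H^2(\O)$, $A\stq\sing u\stq\ov A=\ov{\O_s}$, and $u\in C^{2,\a}_{loc}(\O\setminus\ov{\O_s})$ for every $\a\in(0,1)$.

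Finally I would extract the two asserted statements. Density of $\sing u$ in $\O_s$ follows since $A\st\sing u\st\ov A$ forces $\ov{\sing u}=\ov A=\ov{\O_s}$; the interior regularity is exactly the $C^{2,\a}_{loc}(\O\setminus\ov{\O_s})$ clause of Theorem~\ref{Fsums}; and maximal singularity $\dim_H(\sing u)=N-4$ is immediate from that theorem (or, alternatively, by countable stability \eqref{CountStab}, $\dim_H(\sing u)=\sup_k\dim_H A_k=N-4$ once $\sing u\stq\ov A$ and each $\ov{A_k}$ has the same dimension as $A_k$). The only genuinely nontrivial input is the availability of the compact pieces: I must cite or recall the construction of generalized Cantor sets of prescribed Hausdorff (and coinciding box) dimension that fit inside arbitrarily small prescribed balls — this is the main obstacle, though it is classical (see, e.g., \cite{falc}); everything else is a bookkeeping assembly of Theorem~\ref{Fsums}, Proposition~\ref{hp}, Lemma~\ref{lip}, and Remark~\ref{ck}. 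A small side point to check is that, when $\O_s$ is not compactly contained in $\O$, the sets $A_k$ can still be chosen with $\ov{A_k}\st\O$ by placing them on a sequence of compact subsets exhausting $\O_s$; this does not affect density of $\sing u$ in $\O_s$.
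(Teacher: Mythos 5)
Your proposal is correct and follows essentially the same route as the paper: pick a dense sequence in $\O_s$, plant shrinking generalized Cantor-type compact pieces $A_k$ near those points with coinciding Hausdorff and box dimensions tending to $N-4$, pick $\c_k\in\big(2,\tfrac12(N-\ov\dim_BA_k)\big)$ and coefficients as in Remark~\ref{ck}, and invoke Theorem~\ref{Fsums}. The only cosmetic difference is that the paper spells out the explicit Cantor grill $A_k=R_k(C_k\times[0,1]^{N-5})$, and your side remark about $\O_s$ dense in $\O$ is unnecessary since in that case $\O\setminus\ov{\O_s}=\emptyset$ and the $C^{2,\a}_{loc}$ clause is vacuous.
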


\begin{proof}
Let $(a_k)_{k\in\eN}$ be a sequence of points which is dense in $\O_s$, and let $(r_k)_{k\in\eN}$ be a sequence of positive real numbers, 
such that $B_k=B_{r_k}(a_k)\stq \O_s$ for all $k\in\eN$. Let us choose the subsets $A_k$ of $\eR^N$ 
such that $\dim_BA_k$ exists for any $k\ge1$, $\dim_BA_k=\dim_HA_k < N-4$ and $\di \lim_{k\to\ty}\dim_HA_k  = N-4$. 

For every $k\in\eN$, the set $A_k$ can be constructed as the generalized {\em Cantor grill}; that is, 
$A_k=R_k(C_k\times[0,1]^{N-5})$, where $C_k\st[0,1]$ is the generalized Cantor set (see \cite{falc}) 
with the common value of the Minkowski and Hausdorff dimensions being smaller than and arbitrarily close to $1$, 
and $R_k$ is the composition of a suitable scaling and rigid motion, such that $A_k\st B_k$.

Next, for each $k\in\eN$, we can choose the corresponding real number $\c_k$ satisfying the first condition in \eqref{dim}. 
This choice of $\gamma_k$ is possible since $\dim_B A_k<N-4$ and $N\ge5$; that is, the open interval $\big(2,\frac12(N-\overline{\dim}_B A_k)\big)$, 
in which we choose $\c_k$, is nonempty.

Finally, we take a sequence of positive numbers $(c_k)_{k\in\eN}$ satisfying condition \eqref{sums}, for example, as in Remark \ref{ck}.
The claim then follows from Theorem \ref{Fsums}, by defining
$F(x)$ explicitly by Eq.\ \eqref{Fsuma}.
\end{proof}

\section{Pointwise maximally singular functions}\label{msf}

The result given in Theorem \ref{0,N-4} concerns nonuniform regularity  of a weak solution $u:\O\to \overline{\eR}$ of the Laplace equation
\eqref{laplace} in the following sense:
there exist two disjoint open subsets $\O_r$ and $\O_s$ of $\O$ such that $\ov\O=\ov{\O_r}\cup \ov{\O_s}$, and
the restricted function $u|_{\O_r}$ has no singularities, while $u|_{\O_s}$ is singular in a dense subset of $\O_s$.
Before stating Theorem \ref{0,N-4}, we start with definitions of the main concepts: 
 the pointwise maximally singular functions and the singular dimension of a function at the given point.

\begin{defn}  {\sf (Pointwise maximally singular functions)}\newline%
Assume that $X(\O)$ is a given space $($or just a nonempty set$)$ of Lebesgue measurable real functions 
$u:\O:\to\ov\eR$ defined on a nonempty open subset $\O$ of $\eR^N$.
We say that a function $u\in X(\O)$ is {\em pointwise maximally singular} $($with respect to $X(\O)$$)$
in a given open subset $\O_s\st\O$ if for any open ball $B\st\O_s$ the function $u$ 
is maximally singular on $B$; that is,
\begin{equation*}
\dim_H(\sing (u|_B)) = \sdim X(\O).
\end{equation*}
\end{defn}

In particular, if 
\begin{equation}\label{XO}
X(\O):=\{ u \in  H_0^1(\O):-\D u=F(x)\,\,\mbox{in $\mathcal{D}'(\O)$, for $F\in L^2(\O)$}\}
\end{equation} 
is the solution set corresponding to BVP \eqref{laplace} and $N\ge5$, then a given function $u\in X(\O)$ is pointwise maximally
singular on a prescribed open subset $\O_s$ of $\O$ if $\dim_H(\sing u|_B)=N-4$ for any open ball $B\st\O_s$. Here, we recall that
\bgeq\label{N-4}
\sdim X(\O)=(N-4)^+,
\endeq
and if $N\ge5$, then there exist maximally singular functions in $X(\O)$ (maximally singular functions have been introduced just after Eq.\ \eqref{sing}); see \cite{lana}. 
Note that Theorem \ref{cor} represents a refinement of this result.

One can also imagine the functions $u\in X(\O)$ with varying regularity from point to point.
In this sense, we introduce the following definition.

\begin{defn} {\sf (Singular dimension of a function at a given point)}\newline%
Let $u:\O\to\ov\eR$ be a Lebesgue measurable function.
 For any $a\in\ov\O$ we define {\em singular dimension of  $u$ at the point $a$} by 
\bgeq\label{sd}
(\sd u)(a):=\lim_{r\to0}\dim_H(\sing (u|_{B_r(a)\cap\O}))
\endeq
\end{defn}

The value of $(\sd u)(a)$ represents a numerical measure of concentration of singular points of $u$
near the point $a\in\ov\O$, in the sense of Hausdorff dimension. 
In this way, we have obtained the singular dimension function, associated with $u$, such that
\begin{equation*}
\sd u:\ov\O\to[0,N]. 
\end{equation*}
Let $X$ be any given space (or just a nonempty set) of Lebesgue measurable functions.
It is clear that $0\le(\sd u)(a)\le\sdim X \le N$
for all $u\in X$ and $a\in\ov\O$. 
We say that a function $u\in X$ is {\em maximally singular at the point} $a$ $($with respect to $X$ $)$ if 
$$
(\sd u)(a)=\sdim X.
$$

\begin{remark} The function $\sd u:\ov\O\to[0,N]$ may be nonconstant, as Theorem~\ref{0,N-4} below shows. 
We can address the following open problem. We ask if it is possible to construct a function 
$F\in L^2(\O)$ generating a weak solution $u\in X(\O)= H_0^1(\O)\cap H^2(\O)$, with
$N\ge5$, of the Laplace equation (\ref{laplace}), 
such that $\sd u:\ov\O\to[0, N-4]$ is equal to any prescribed, continuous and surjective function from $\ov\O$ to $[0,N-4]$?
More generally, how do the functions $\sd u$ look like for $u\in X(\O)$?
In Lemma~\ref{upper} below, we show that the function $\sd u:\ov\O\to[0,N]$ is upper semicontinuous 
for any measurable function $u:\O\to \overline{\eR}$. See also the open problems stated near the end of Section \ref{op} below.
\end{remark}

The following theorem states that, under certain assumptions
on the dimension of $\eR^N$, for any two prescribed disjoint open subsets of $\O$ there exists $F\in L^2(\O)$ such that
the corresponding weak solution $u \in H_0^1(\O)$ of $\di -\D u = F(x)$ is of class $C_{loc}^{2,\alpha}$ on one part of the domain
while it is pointwise maximally singular in the closure of the other part. 

\begin{theorem}\label{0,N-4} {\sf (Maximal contrast of weak solutions of BVP \eqref{laplace})}\newline%
Let $\O$ be a bounded open subset of $\eR^N$ of class $C^2$, where $N\ge5$.
Let $\O_r$ and $\O_s$ be any two prescribed disjoint open subsets of $\O$ such that 
$\ov\O=\ov{\O_r}\cup\ov{\O_s}$. 
Then there exists $F\in L^2(\O)$ such that
for the corresponding weak solution $u\in H_0^1(\O)$ of problem \eqref{laplace} one has:
\begin{align}
 (\sd u)(a) = 
\begin{cases}
 0 & \textrm{ for all } a \in \O_r,\\
 N-4 & \textrm{ for all } a \in \ov{\O_s}.
\end{cases}
\label{Thm3.Rez1}
\end{align}
\end{theorem}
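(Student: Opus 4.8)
The plan is to reduce Theorem \ref{0,N-4} to Theorem \ref{Fsums} (and its proof technique) by carefully choosing the right-hand side $F$ as a single series of the form \eqref{Fsuma}, but with the singular sets $A_k$ all contained in $\ov{\O_s}$ and accumulating densely there, while leaving $\O_r$ untouched. First I would fix a sequence $(a_k)_{k\in\eN}$ that is dense in $\O_s$, choose radii $r_k>0$ with $\ov{B_{r_k}(a_k)}\st\O_s$, and inside each ball place a generalized Cantor grill $A_k=R_k(C_k\times[0,1]^{N-5})$ exactly as in the proof of Theorem \ref{cor}, arranging that $\dim_B A_k=\dim_H A_k$ exists, is strictly less than $N-4$, and $\dim_H A_k\to N-4$ as $k\to\ty$. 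For each $k$ pick $\c_k\in\big(2,\tfrac12(N-\ov\dim_B A_k)\big)$ (nonempty since $N\ge5$ and $\dim_B A_k<N-4$), and pick positive $c_k$ satisfying \eqref{sums}, e.g.\ as in Remark \ref{ck}. Then $F$ defined by \eqref{Fsuma} lies in $L^2(\O)$, and by Theorem \ref{Fsums} the weak solution $u$ satisfies $A\stq\sing u\stq\ov A$ with $A=\cup_k A_k$, $\dim_H(\sing u)=N-4$, and $u\in C^{2,\a}_{loc}(\O\setminus\ov A)$ for every $\a\in(0,1)$.

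The core of the argument is then to compute $(\sd u)(a)$ at every point. For $a\in\O_r$: since $\ov A\st\ov{\O_s}$ and $\O_r$ is open and disjoint from $\O_s$, there is a ball $B_r(a)\st\O_r$ with $B_r(a)\cap\ov A=\emptyset$; hence $\sing(u|_{B_r(a)})\stq\ov A\cap B_r(a)=\emptyset$ (using $\sing u\stq\ov A$ from Theorem \ref{Fsums}), so $\dim_H(\sing(u|_{B_r(a)}))=0$ and therefore $(\sd u)(a)=0$. For $a\in\ov{\O_s}$ I must show $(\sd u)(a)=N-4$, i.e.\ that for every $r>0$ the set $\sing(u|_{B_r(a)\cap\O})$ has Hausdorff dimension $N-4$. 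The upper bound $\le N-4$ is immediate from $\sing u\stq\ov A$ together with $\dim_H\ov A=\dim_H A=\sup_k\dim_H A_k=N-4$ (countable stability \eqref{CountStab}, and the fact that taking closures does not increase Hausdorff dimension for this countable union — more carefully, $\ov A=\ov{\cup_k A_k}$ but since the $A_k$ are compact and accumulate only at points of $\ov{\O_s}$, $\ov A\setminus A$ is contained in the at most countable set of accumulation points plus the $A_k$'s themselves, so $\dim_H\ov A=N-4$). For the lower bound: given $r>0$, density of $(a_k)$ in $\O_s$ and the openness of $\O_s$ near $a\in\ov{\O_s}$ guarantee that $B_k=B_{r_k}(a_k)\st B_r(a)\cap\O$ for infinitely many $k$; for each such $k$, $A_k\st B_k$ and $A_k\stq\sing u$ (Theorem \ref{Fsums}), hence $A_k\stq\sing(u|_{B_r(a)\cap\O})$, giving $\dim_H(\sing(u|_{B_r(a)\cap\O}))\ge\dim_H A_k$; letting $k\to\ty$ along this subsequence yields $\ge N-4$. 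Combining, $\dim_H(\sing(u|_{B_r(a)\cap\O}))=N-4$ for every $r>0$, so $(\sd u)(a)=N-4$. The $C^{2,\a}_{loc}$ regularity on $\O_r$ (though not explicitly demanded in \eqref{Thm3.Rez1}, it is the natural companion statement) follows from $\O_r\st\O\setminus\ov A$ and Theorem \ref{Fsums}.

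The step I expect to be the main obstacle is the density/accumulation bookkeeping needed to get the lower bound $(\sd u)(a)\ge N-4$ for boundary points $a\in\pa\O_s\cap\ov\O$: one must verify that for such $a$ and every $r>0$ there really are balls $B_k$ from the construction sitting entirely inside $B_r(a)\cap\O$ with $A_k$ of dimension arbitrarily close to $N-4$. This requires that the dense sequence $(a_k)$ and the radii $r_k$ be chosen with some care — for instance, one should ensure that for each $k$ there are infinitely many later indices $j$ with $a_j$ close to $a_k$ and $r_j$ small, and that the "large-dimension" sets $A_j$ (those with $\dim_H A_j$ near $N-4$) are themselves placed densely in $\O_s$, not concentrated in one region. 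A clean way to arrange this is to index by pairs: enumerate a countable base of balls in $\O_s$ and, inside the $m$-th base ball, place a shrinking sequence of Cantor grills whose dimensions increase to $N-4$; then any $B_r(a)$ with $a\in\ov{\O_s}$ meets some base ball, hence contains grills of dimension arbitrarily close to $N-4$. A secondary, more routine, technical point is confirming $\dim_H\ov A=N-4$ rather than something larger, which as noted reduces to controlling $\ov A\setminus A$; this is harmless because that set is countable (or at least of dimension $0$, being the accumulation points of the sequence $(a_k)$). The rest — membership $F\in L^2(\O)$, the two inclusions for $\sing u$, and the $C^{2,\a}_{loc}$ regularity — is quoted directly from Proposition \ref{hp}, Lemma \ref{lip} and Theorem \ref{Fsums}.
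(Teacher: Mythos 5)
Your "clean fix" via a doubly-indexed family indexed by base balls is precisely the paper's construction: the paper takes a countable base $(B_j)_j$ of open balls in $\O_s$, a nondecreasing sequence $d_k\nearrow N-4$ with $d_k\in(N-5,N-4)$, and for each pair $(k,j)$ places a Cantor grill $A_{kj}\st B_j$ with $\dim_H A_{kj}=\dim_B A_{kj}=d_k$, then forms $F$ as the double series \eqref{Fs}. The lower bound at an arbitrary $a\in\ov{\O_s}$ is then obtained exactly as you describe: any ball $B_r(a)$ contains some base ball $B_j$, and $\bigcup_k A_{kj}\stq\sing(u|_{B_j})$ has Hausdorff dimension $\sup_k d_k=N-4$ by countable stability. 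So on the lower bound and on the $\O_r$ side, your argument agrees with the paper's.

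However, there is a genuine error in your upper bound argument. You claim that $\dim_H\ov A=N-4$ on the grounds that $\ov A\setminus A$ is contained in an ``at most countable'' set of accumulation points. This is false. By the very nature of the construction — the sets $A_{kj}$ are placed inside balls $B_j$ that form a base of $\O_s$ — the union $A=\bigcup_{k,j}A_{kj}$ is dense in $\O_s$, so $\ov A\supseteq\ov{\O_s}$ and hence $\dim_H\ov A=N$, not $N-4$. (The same problem would already arise in your single-index version once the centers $a_k$ are dense and the radii $r_k\to 0$.) The inclusion $\sing u\stq\ov A$ from Theorem~\ref{Fsums} is therefore vacuous as far as a dimension upper bound is concerned. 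The correct route, which you actually have at hand, is simply to use the other conclusion of Theorem~\ref{Fsums}, namely $\dim_H(\sing u)=N-4$, which gives $\dim_H\big(\sing(u|_{B_r(a)\cap\O})\big)\le\dim_H(\sing u)=N-4$ directly. (The paper itself instead writes the chain $\dim_H(\sing(u|_{B_j}))\le\dim_H(\sing u)\le\sdim H^2(\O)=N-4$, using $u\in H^2(\O)$ from elliptic regularity and $\sdim W^{2,2}(\O)=N-4$.) With this one substitution your argument is complete and coincides with the paper's.
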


\begin{proof}  
The construction of functions $F \in L^2(\Omega)$ relies on the form given in \eqref{Fsuma}.

Let $(B_j)_{j \in \eN}$ be a countable base of open balls in $\O_s$. Let $(d_k)_{k\in\eN}$
be an increasing sequence of positive real numbers such that $d_k\in(N-5,N-4)$ and $d_k\to N-4$ as $k\to\ty$. For each $k$ there exists 
a family of sets $\{A_{kj}\st\O:j\in \eN\}$, such that
\medskip 

($a$) $A_{kj}\st B_j$ for all $j$,
\smallskip 

($b$) $d_k=\dim_H A_{kj}=\dim_BA_{kj}$ for all $j$.
\medskip 

For any fixed positive integer $k$, the sequence of sets $(A_{kj})_{j\in \eN}$, with $j\in\eN$, satisfying properties ($a$) and ($b$), can be constructed
using a fixed set $A_k$ of both Hausdorff and box dimensions equal to $d_k$. 
To this end let us first  define
an auxiliary set 
$$
A_k=C_k\times [0,1]^{N-5}\st\eR^{N-4}\st\eR^N,
$$ 
where $C_k\st[0,1]$ is a generalized Cantor set such that $\dim_BC_k=\dim_HC_k=d_k-(N-5)\in(0,1)$; see Falconer \cite{falc} for their construction. 
Here, if $N=5$, the set $[0,1]^{N-5}$ is identified with a point. 
Then, by scaling and translating  $A_k$, we can easily construct a sequence of sets
$A_{kj}$, with $j\in \eN$, satisfying the desired properties $(a)$ and $(b)$. Recall that the operations of scaling and translating of the set 
$A_k$ keep both the Hausdorff and box dimensions unchanged; see Falconer \cite{falc}.
Therefore, using the well known additivity property of the Hausdorff dimension with respect to 
Cartesian products of sets (see Falconer \cite{falc}), we conclude that
$$
\dim_HA_{kj}=\dim_HA_k=\dim_HC_k+(N-5)=d_k,
$$
and analogously $\dim_BA_{kj}=d_k$, for all $k,j\in\eN$. 

Now, we 
choose a sequence $(\c_k)_{k\in \eN}$ of real numbers such that
$$
2<\c_k<\frac1{2}(N-d_k)\q\mbox{for all $k$}.
$$
Note that, for each $k\ge1$, the open interval $\big(2,\frac1{2}(N-d_k)\big)$ in which we choose $\c_k$ is nonempty, since $d_k<N-4$.
We have that for each $j\in\eN$,
$$
\dim_HA_{kj}=d_k\to N-4\q\mbox{as $k\to\ty$.} 
$$
We can also achieve that the sequence $(d_k)_{k\in \eN}$ is nondecreasing.
The function $F\in L^2(\O)$ is then constructed analogously as in the proof of
Theorem \ref{Fsums}: 
\bgeq\label{Fs}
F(x)=\sum_{k,j=1}^\ty \frac{c_{kj}}{\|d(\,\cdot\,,A_{kj})^{-\c_k}\|_{L^2}}\,d(x,A_{kj})^{-\c_k}
\endeq
where the double sequence $(c_{kj})_{k,j \in \eN}$ is chosen so that  $c_{kj}>0$ and
\bgeq\label{sumskk}
\sum_{k,j=1}^\ty c_{kj}<\ty,\q\sum_{k,j=1}^\ty \frac{c_{kj}}{\|d(\,\cdot\,,A_{kj})^{-\c_k}\|_{L^2}}<\ty.
\endeq
If $u\in H_0^1(\O)$ is a weak solution of the corresponding distribution equation $-\D u=F(x)$, then due to Theorem~\ref{Fsums} we have that for any fixed $j\in\eN$,
\bgeq\label{uj}
\bigcup_{k=1}^\infty A_{kj} \stq \sing (u|_{B_j}).
\endeq
The countable stability of the Hausdorff dimension (along with the fact that the sequence $(d_k)_{k\in \eN}$ is nondecreasing) implies that
$$
\dim_H\Big(\bigcup_{k=1}^\infty A_{jk}\Big)=\sup_k(\dim_HA_{kj})=\sup_k d_k=
\lim_{k\to\ty}d_k=N-4.
$$
Therefore, due to Eq.\ \eqref{uj} and since (by the regularity theory for weak solutions of elliptic BVPs; see, e.g.,
\cite{brezis}) $u\in H^2(\O):=W^{2,2}(\O)$, we conclude that for any fixed $j\in\eN$, 
\begin{equation}
\begin{aligned}
N-4&=\dim_H\Big(\bigcup_{k=1}^\infty A_{kj}\Big)\le\dim_H\big(\sing (u|_{B_j})\big)\\
&\le\dim_H(\sing u)\le\sdim H^2(\O)=N-4,
\end{aligned}
\end{equation}
where in the last equality we have used the fact that $\sdim W^{k,p}(\O)=N-kp$, provided $p>1$, $k\in\eN$ and $kp<N$; 
see \cite{cras} (here, we take $k=p=2$).
Hence, 
$$
\dim_H(\sing u|_{B_j})=N-4,
$$ 
for any $j\in\eN$. Since $(B_j)_{j \in \eN}$ is the base of neighborhoods of $\O_s$, the solution $u$
corresponding to $F$ is maximally singular in any point $a\in\ov{\O_s}$; that is,
$(\sd u)|_{\ov{\O_s}}\equiv N-4$, while $(\sd u)|_{\O_r}\equiv0$, since $\sing u|_{\O_r} = \emptyset$.
In this way we proved Eq.\ \eqref{Thm3.Rez1}. 
This completes the proof of the theorem. 
\end{proof}

\begin{cor} \label{(iii)}
 With the assumptions given by Theorem~\ref{0,N-4}, there exists a function $F\in L^2(\O)$ such that the corresponding weak 
 solution of the BVP $-\D u=F(x)$, $u\in H_0^1(\O)$, 
 is pointwise maximally singular in the whole of $\ov\O$, i.e. $(\sd u)(a) = N-4$ for all $a\in\ov\O$.
\end{cor}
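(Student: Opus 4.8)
The plan is to read off Corollary~\ref{(iii)} as the degenerate instance $\O_r=\emptyset$, $\O_s=\O$ of Theorem~\ref{0,N-4}. First I would verify that this choice is admissible in the hypotheses of Theorem~\ref{0,N-4}: the empty set is open, it is (vacuously) disjoint from $\O$, and $\ov{\O_r}\cup\ov{\O_s}=\emptyset\cup\ov\O=\ov\O$; the remaining hypotheses---$\O$ bounded and of class $C^2$, and $N\ge5$---are exactly those assumed in the corollary. Applying Theorem~\ref{0,N-4} therefore yields $F\in L^2(\O)$ whose associated weak solution $u\in H_0^1(\O)$ of \eqref{laplace} satisfies \eqref{Thm3.Rez1}. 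The first branch of \eqref{Thm3.Rez1} (the case $a\in\O_r$) is empty of content, and the second branch states $(\sd u)(a)=N-4$ for every $a\in\ov{\O_s}=\ov\O$, which is precisely the assertion of the corollary. As a by-product, $u\in H^2(\O)$ by elliptic regularity, and $u$ is pointwise maximally singular in all of $\O$, since $\dim_H(\sing(u|_B))=N-4$ for every open ball $B\st\O$.

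The only point meriting a line of justification is that the argument in the proof of Theorem~\ref{0,N-4} really does go through with $\O_r$ empty. Inspecting that proof, $\O_r$ plays no role whatsoever in the construction of $F$: one needs only a countable base $(B_j)_{j\in\eN}$ of open balls of $\O_s$ (here $\O_s=\O$), the Cantor-grill sets $A_{kj}\st B_j$ of dimension $d_k\to N-4$, the exponents $\c_k\in\bigl(2,\frac12(N-d_k)\bigr)$, and coefficients $c_{kj}>0$ obeying \eqref{sumskk}---all available verbatim with $\O_s=\O$. The conclusion $(\sd u)|_{\O_r}\equiv0$ in that proof is the sole appearance of $\O_r$, and it is simply vacuous here. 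Hence one still obtains $\dim_H(\sing(u|_{B_j}))=N-4$ for every $j$, and because $(B_j)_j$ is a neighbourhood base at each point of $\ov\O$, this forces $(\sd u)(a)=N-4$ for all $a\in\ov\O$.

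I do not anticipate a genuine obstacle, as the statement is a specialization of Theorem~\ref{0,N-4}. Should one prefer a self-contained proof, it suffices to repeat the construction of Theorems~\ref{Fsums} and \ref{0,N-4} starting from a sequence $(a_k)_{k\in\eN}$ that is dense in the whole of $\O$ rather than in a proper open subset; no new ideas are required. Either way, the substantive content---that an $L^2$ right-hand side can force the weak solution of \eqref{laplace} to carry an $(N-4)$-dimensional singular set inside every ball---is already furnished by Theorem~\ref{Fsums}, and the corollary merely records that this can be realized simultaneously at every point of $\ov\O$.
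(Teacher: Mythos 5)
Your proof is correct and matches the paper's own argument: both take the degenerate instance $\O_r=\emptyset$, $\O_s=\O$ of Theorem~\ref{0,N-4} and observe that the construction in its proof is unaffected by $\O_r$ being empty. The only nuance the paper flags explicitly, and you touch on only implicitly, is that Lemma~\ref{lip} assumes $A=\cup_k A_k$ is not dense in $\O$; when $\O_s=\O$ the union $\cup_{k,j}A_{kj}$ may well be dense, so the local-Lipschitz/$C^{2,\alpha}_{loc}$ regularity claim on $\O\setminus\ov A$ becomes vacuous rather than false, and the remaining conclusions of Theorem~\ref{Fsums} (maximal singularity, $u\in H^2(\O)$) do not rely on that lemma -- which is why the specialization still goes through.
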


\begin{proof}
 Let us define $\O_r:=\emptyset$ and $\O_s:=\O$. 
 The claim follows immediately from the proof of Theorem~\ref{0,N-4}. Here, we note that Lemma~\ref{lip} holds for $\O_s = \O$ and
$\O_r = \emptyset$ as well. 
\end{proof}

\begin{remark}
 We point out that in Theorem \ref{0,N-4}, we have that $\sd u\equiv \sd F$. Their common value is either $0$ or $N-4$, 
depending on whether $a \in \ov \O \setminus \ov{\O_s}$ or $a \in \ov \O \setminus\O_r$. In general, for weak solutions of \eqref{laplace}, 
we have that $\sd u \le \sd F$ (since $\sing u\stq\sing F$), and the inequality may be strict. 
It is easy to construct a nonnegative $L^2$-function $F$ as in \eqref{Fsuma}, such that
$\sd F\equiv N$, while $\sd u\equiv 0$. It suffices to take $\c_k\in(0,2)$ in~\eqref{Fsuma}, 
with $\cup_k A_k$ being dense in $\O$. 
\end{remark}

As a consequence of Theorem~\ref{0,N-4}, we provide the following corollary which states that
the Lebesgue spaces, Sobolev spaces, and Bessel potential spaces possess pointwise maximally singular functions.

\begin{cor} Assume that $\O$ is an open set in $\eR^N$, not necessarily bounded.
Let $X=L^p(\O)$ $($with $1\le p<\ty$ $)$, or $W^{k,p}(\O)$ $($with
$1<p<\ty$, $kp<N$ $)$, or $L^{\a,p}(\eR^N)$ $($with $1<p<\ty$,  $0<\a p<N$ $)$.  
Then, $X$ possesses pointwise maximally singular functions $u\in X$; i.e.,
there exist functions $u\in X$ such that $(\sd u)(a)=\sdim X$ for all $a\in\ov\O$.
Moreover, these functions form a dense subset in $X$.
$($For $X=W^{k,p}(\O)$ 
we also assume that the domain $\O$ satisfies the Lipschitz property or, more generally, that it is a 
{\rm Sobolev extension domain}; i.e., $X=W^{k,p}(\O)$ can be continuously embedded into the space $W^{k,p}(\eR^N)$ $)$.
\end{cor}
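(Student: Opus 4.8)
The plan is to reduce the statement for each of the three scales of function spaces to the model construction already carried out in Theorem~\ref{0,N-4} and its underlying building blocks, namely the functions of the form $F(x)=\sum_{k} c_k\,d(x,A_k)^{-\c_k}/\|d(\,\cdot\,,A_k)^{-\c_k}\|_{L^p}$ with $A_k$ generalized Cantor grills of Hausdorff and box dimension $d_k\uparrow \sdim X$. The key observation is that Theorem~\ref{0,N-4} was really a statement about a \emph{function} built directly from such a series (not about a PDE solution per se): the PDE only entered through the regularizing shift $\c\mapsto\c-2$. For the present corollary the spaces $X$ already \emph{are} function spaces with known singular dimension ($\sdim L^p=N$, $\sdim W^{k,p}=N-kp$, $\sdim L^{\a,p}=N-\a p$, recalled in the Introduction and in \cite{cras}), so I would work with the explicit series directly and never invoke \eqref{laplace}. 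First I would fix a countable base of open balls $(B_j)_{j\in\eN}$ for $\O$ (or for $\O_s=\O$ in the trivial splitting $\O_r=\emptyset$), choose $d_k\uparrow \sdim X$ with $d_k<\sdim X$, and for each pair $(k,j)$ take $A_{kj}\subset B_j$ a scaled, translated copy of a generalized Cantor grill $C_k\times[0,1]^{m}$ with $\dim_H A_{kj}=\dim_B A_{kj}=d_k$, exactly as in the proof of Theorem~\ref{0,N-4}.

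Next I would verify, scale by scale, that one can choose exponents $\c_k$ and coefficients $c_{kj}>0$ so that the series
\[
u(x)=\sum_{k,j=1}^{\ty}\frac{c_{kj}}{\|d(\,\cdot\,,A_{kj})^{-\c_k}\|_{X}}\,d(x,A_{kj})^{-\c_k}
\]
converges in $X$. For $X=L^p(\O)$ this needs $\c_k<\frac1p(N-\dim_B A_{kj})=\frac1p(N-d_k)$, which is a nonempty interval of positive numbers for every $k$ by Proposition~\ref{hp}(b); for $X=W^{k,p}(\O)$ the condition becomes $\c_k\in(\,k,\ \frac1p(N-d_j)\,)$-type (the $W^{k,p}$-norm of $d(\,\cdot\,,A)^{-\c}$ is finite when $\c+k<\frac1p(N-\dim_B A)$, and the function is genuinely singular there because differentiation only lowers the singular order by an integer amount, just as $-\Delta$ did by $2$ in Theorem~\ref{hpr}); for the Bessel potential space $L^{\a,p}(\eR^N)$ one uses the embedding $L^{\a,p}\hookleftarrow$ (and into) the appropriate Sobolev/Besov scale together with the Harvey--Polking-type criterion, or equivalently one writes $d(\,\cdot\,,A)^{-\c}$ as a Bessel potential of an $L^p$-function with the right integrability. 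In each case, after normalizing each term to have unit norm in $X$, absolute convergence in $X$ is guaranteed by requiring $\sum_{k,j} c_{kj}<\ty$, and one fixes $c_{kj}$ concretely as in Remark~\ref{ck}, e.g.\ $c_{kj}\le 2^{-(k+j)}\min\{1,\|d(\,\cdot\,,A_{kj})^{-\c_k}\|_X\}$; the uncondensed sum $\sum c_{kj}\,d(\,\cdot\,,A_{kj})^{-\c_k}$ is then locally bounded (hence locally Lipschitz) off $\ov A$ by Lemma~\ref{lip}, so $u$ is genuinely defined pointwise a.e.

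Having the $X$-function $u$, the pointwise maximal singularity follows exactly as in Theorem~\ref{0,N-4}: for each fixed $j$ we have $\bigcup_k A_{kj}\subseteq \sing(u|_{B_j})$ (every term with that $j$ forces $u(x)\gtrsim d(x,A_{kj})^{-(\c_k-s)}$ with $\c_k-s>0$ near each point of $A_{kj}$, $s$ being the order lost in passing from $F$ to the ambient space — $s=0$ for $L^p$, $s=k$ for $W^{k,p}$, $s$ playing the analogous role for $L^{\a,p}$), and by the additivity of Hausdorff dimension under products and the countable stability \eqref{CountStab} with $d_k\uparrow\sdim X$,
\[
\sdim X=\sup_k d_k=\dim_H\Big(\bigcup_{k=1}^{\ty}A_{kj}\Big)\le \dim_H\big(\sing(u|_{B_j})\big)\le \sdim X,
\]
so $(\sd u)(a)=\sdim X$ for every $a\in\ov\O$ because $(B_j)$ is a base. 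Finally, density: given any $g\in X$ and $\e>0$, pick the above $u$ with all coefficients scaled down so $\|u\|_X<\e$; then $g+u\in X$, $\|(g+u)-g\|_X<\e$, and since $g$ is finite a.e.\ while $u$ blows up like a positive power of $d(\,\cdot\,,A_{kj})^{-1}$ at each point of the dense set $\bigcup A_{kj}$, the singular set of $g+u$ still contains $\bigcup_k A_{kj}$ in every $B_j$ (adding a finite a.e.\ function cannot remove a pole), hence $g+u$ is again pointwise maximally singular; for $W^{k,p}$ on a Sobolev extension domain one transfers the construction from $\eR^N$ by the extension/restriction operators, which preserves both membership in the space and the local singular behaviour. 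The main obstacle I anticipate is not the dimension bookkeeping (that is copied verbatim from Theorem~\ref{0,N-4}) but the ``genuinely singular'' half for $W^{k,p}$ and especially for the Bessel potential spaces $L^{\a,p}$ with non-integer $\a$: one must be sure that belonging to $L^{\a,p}$ does not secretly regularize $d(\,\cdot\,,A)^{-\c}$ away from being in $\sing$, i.e.\ that the Harvey--Polking threshold is sharp in the relevant direction and that the fractional smoothness $\a$ lowers the order of singularity by exactly $\a$ (not more), which is where I would spend the real work, invoking the sharpness results for these scales recorded in \cite{cras}, \cite{lana} and \cite{archiv}.
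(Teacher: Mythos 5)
Your construction of the pointwise maximally singular function is, up to bookkeeping, the same as the paper's: the paper fixes a base of balls $(B_j)$, for each $j$ takes a nonnegative maximally singular $u_j\in X$ whose singular set contains a set $A_j\st B_j$ with $\dim_H A_j=\sdim X$ (these $u_j$ are exactly the Cantor-grill series you describe, already supplied by \cite{lana}, \cite{cras}, \cite{archiv}), and then sets $u=\sum_j 2^{-j}u_j/\|u_j\|_X$. Your version unrolls each $u_j$ into its inner $k$-sum and re-derives its existence, which is fine but longer; the dimension bookkeeping, the role of countable stability, and the use of the base $(B_j)$ to conclude $(\sd u)(a)=\sdim X$ for all $a\in\ov\O$ are identical in substance.

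The density argument, however, has a genuine gap. You claim that for arbitrary $g\in X$ and small $u$, the sum $g+u$ is still pointwise maximally singular because ``adding a finite a.e.\ function cannot remove a pole.'' This is false under the paper's definition of $\sing$: membership $a\in\sing v$ requires a \emph{uniform lower bound} $v(x)\ge C|x-a|^{-\c}$ a.e.\ near $a$, and a finite-a.e.\ function $g$ may itself have a negative pole at $a$ (e.g.\ $g=-u$, still in $X$) that cancels the lower bound, so $\sing(g+u)$ can fail to contain $A_{kj}$. The paper avoids this by first approximating $g$ by a \emph{smooth compactly supported} $v$ (bounded, hence harmless to $\sing$) and then adding $k^{-1}u_0$ for $k$ large: $\bigcup_{k\ge1}(k^{-1}u_0+C_0^\ty(\O))$ is dense in $X$ and every element is pointwise maximally singular because $v$ is bounded; for $W^{k,p}(\O)$ this needs the extension-domain hypothesis so that $C_0^\ty(\eR^N)|_\O$ is dense (Adams \cite[p.~54]{adams}). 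You should replace the $g+u$ step by this two-stage approximation.
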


\begin{proof} From the construction of maximally singular function in $X$, we know for any prescribed open ball $B$
 there exists nonnegative maximally singular function $u\in X$ and a set $A$ such that $A \stq \sing u$, 
 $\dim_HA = \sdim X$, and $A\st B$ (see the proof of Theorem~\ref{0,N-4}).  
 In order to construct a pointwise maximally singular function in $X$, let $(B_j)_{j\ge 1}$ be a base of open balls in $\O$,
 and let $u_j\in X$ be nonnegative function such that $A_j\stq\sing u_j$, $\dim_HA_j=\sdim X$,
 $A_j\st B_j$. In particular, each $u_j$ is maximally singular. 
 Then it is easy to verify that the function $\di u = \sum_j2^{-j}\frac{u_j}{\|u_j\|_X}$ is pointwise maximally singular.

Let $X = L^p(\O)$ and let $u_0$ be a pointwise maximally singular function in $X$.
Since $C_0^{\ty}(\O)$ is dense in $X$, then 
\begin{equation}
\bigcup_{k=1}^\ty\big(k^{-1}u_0+C_0^{\ty}(\O)\big)
\end{equation}
 is a dense subset of $X$, consisting of pointwise maximally singular functions. Indeed,
 each of the functions $k^{-1}u_0+v$ is pointwise maximally singular for any function~$v\in C_0^{\ty}(\O)$ and any $k\ge1$.
The case of $X=L^{\a,p}(\eR^N)$ is treated similarly.
 
 If $X = W^{k,p}(\O)$, then we proceed analogously using the fact that the space $C_0^{\ty}(\eR^N)|_{\O}$ is dense
 in $W^{k,p}(\O)$ if $\O$ is a Lipschitz domain (or more generally, a {\em Sobolev extension domain}); see Adams \cite[p.\ 54]{adams}.
\end{proof}

\section{Upper semicontinuity of the singular dimension function and open problems}\label{op}

 We conclude this article with one observation about the singular dimension function $\sd u:\ov\O\to[0,N]$, introduced by Eq.~\eqref{sd}.

\begin{lemma}\label{upper}
Let $u\:\O\to\eR$ be a Lebesgue measurable function. Then, the singular dimension function $\sd u:\ov\O\to[0,N]$
is upper semicontinuous; that is, for any $a\in\ov\O$,
$$
(\sd u)(a)\ge\limsup_{a_k\to a}(\sd u)(a_k).
$$
\end{lemma}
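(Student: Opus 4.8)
The plan is to prove upper semicontinuity directly from the definition of $\sd u$ as the limit of $\dim_H(\sing(u|_{B_r(a)\cap\O}))$ as $r\to0$, exploiting monotonicity of this quantity in $r$ and the fact that small balls around a nearby point are contained in slightly larger balls around $a$. First I would observe that for fixed $a$, the function $r\mapsto\dim_H(\sing(u|_{B_r(a)\cap\O}))$ is nondecreasing in $r$ (since $B_{r'}(a)\subseteq B_r(a)$ for $r'<r$ forces $\sing(u|_{B_{r'}(a)\cap\O})\subseteq\sing(u|_{B_r(a)\cap\O})$, using that the singular set is defined by a local condition), so the limit defining $(\sd u)(a)$ is in fact an infimum over $r>0$, and the limit exists.

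Next I would fix $a\in\ov\O$ and $\e>0$, and choose $r_0>0$ such that $\dim_H(\sing(u|_{B_{r_0}(a)\cap\O}))\le(\sd u)(a)+\e$; this is possible precisely because the limit is an infimum. Now take any point $b$ with $|b-a|<r_0/2$. For every $\rho<r_0/2$ we have $B_\rho(b)\subseteq B_{r_0}(a)$, hence $\sing(u|_{B_\rho(b)\cap\O})\subseteq\sing(u|_{B_{r_0}(a)\cap\O})$ and therefore $\dim_H(\sing(u|_{B_\rho(b)\cap\O}))\le(\sd u)(a)+\e$. Letting $\rho\to0$ gives $(\sd u)(b)\le(\sd u)(a)+\e$ for all $b$ in the ball $B_{r_0/2}(a)$. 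Consequently $\limsup_{a_k\to a}(\sd u)(a_k)\le(\sd u)(a)+\e$, and since $\e>0$ was arbitrary we conclude $\limsup_{a_k\to a}(\sd u)(a_k)\le(\sd u)(a)$, which is the claimed upper semicontinuity.

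The only subtlety — and the one point worth stating carefully rather than the main obstacle — is the monotonicity of the singular set under restriction to subdomains: one must check that if $G'\subseteq G$ are open subsets of $\O$, then $\sing(u|_{G'})\subseteq\sing(u|_G)$. This follows immediately from the definition in the Introduction, since membership $a\in\sing(u|_G)$ only requires the bound $u(x)\ge C|x-a|^{-\c}$ on some small ball $B_r(a)$ which may be taken inside $G'$; no genuine difficulty arises. I do not expect a real obstacle here: the statement is essentially formal once one recognizes that $\sd u$ is an infimum of a monotone family and that the relevant balls nest. The proof is short and elementary, and the main care is simply bookkeeping with the radii.
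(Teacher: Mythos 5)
Your proof is correct and uses essentially the same idea as the paper's: the nesting of small balls around nearby points inside a slightly larger ball around $a$, combined with the monotonicity of $\sing(\cdot)$ under restriction. The only difference is presentational — you argue directly (and helpfully make explicit the monotonicity of $r\mapsto\dim_H(\sing(u|_{B_r(a)\cap\O}))$, which also justifies the existence of the limit in the definition of $\sd u$), whereas the paper runs the same estimate as a proof by contradiction.
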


\begin{proof}
Assume the contrary. Then, there exists $\e>0$ and a subsequence $(k')$ of $(k)$ such that
$$
(\sd u)(a_{k'})\ge\e+(\sd u)(a).
$$
Hence, there exists the sequence of radii $r(k')$ small enough, so that
\bgeq\label{k'}
\dim_H(\sing (u|_{B_{r(k')}(a_{k'})\cap\O}))\ge\frac\e2+(\sd u)(a).
\endeq
For any fixed ball $B_r(a)$, there exists $k'$ large enough such that
$B_{r(k')}(a_{k'})\st B_r(a)$. Taking $\limsup$ as $k'\to\ty$ in 
$$
\dim_H(\sing (u|_{B_{r(k')}(a_{k'})})\le 
\dim_H(\sing (u|_{B_{r}(a)})),
$$ 
and then passing to the limit as $r\to0$, we obtain that
$$
\limsup_{k'\to\ty}\dim_H(\sing (u|_{B_{r(k')}(a_{k'}))\cap\O})\le (\sd u)(a).
$$
But this contradicts~(\ref{k'}).
\end{proof}

Knowing that a function $\sd u:\ov\O\to[0,N]$ is upper semicontinuous, it naturally arises
the following question, which we state as an open problem.

Let $X=X(\O)$ be a given space (or set) of real functions (for example, let 
$X=L^2(\O)$, or $X=W^{k,p}(\O)$ with $kp<N$, or let $X$ be defined via the BVP \ref{laplace} by Eq.\ \eqref{XO}).
Given any upper semicontinuous function $f:\O\to[0,\sdim X]$, is there a function $u\in X$
such that $\sd u  \equiv f$?
\medskip

 For a conclusion of this article, we point out that one can study analogous questions as in this paper in the context of the standard $p$-Laplace BVP:
\bgeq\label{plaplace}
-\D_p u=F(x),\,\,u\in W_0^{1,p}(\O),
\endeq
with $1<p<\ty$ and $F\in L^{p'}(\O)$, where $\O$ is a bounded open subset of $\eR^N$, 
$\D_p u:=\div(|\n u|^{p-2}\n u)$ and $p':=p/(p-1)$ is the conjugate exponent associated with $p$. 
We do not know if the solution set  
\begin{equation}\label{XOp}
X(\O,p):=\{ u \in  W_0^{1,p}(\O):-\D_p u=F(x)\,\,\mbox{in $\mathcal{D}'(\O)$, for $F\in L^{p'}(\O)$}\},
\end{equation}
corresponding to BVP \eqref{plaplace}, possesses maximally singular functions for $p\ne2$. 
However, it can be shown that if $p\ge2$, then (see \cite{gensing}):
\bgeq\label{N-pp'}
\sdim X(\O,p)=(N-pp')^+.
\endeq
This result extends the result stated in \eqref{N-4}. Furthermore, it is not yet known if \ \eqref{N-pp'} holds when $p\in(1,2)$ as well.


\end{document}